\documentclass{amsart}
\usepackage{amsmath,amsthm,amssymb,bm}
\usepackage{color}
\usepackage{mathtools}
\mathtoolsset{showonlyrefs=true} %引用した式ã?み番号をå?力すã'? mathtools

\makeatletter
    
    \@addtoreset{equation}{section}
  \makeatother

\newtheorem{definition}{Definition}[section]

\newtheorem{theorem}[definition]{Theorem}
\newtheorem{lemma}[definition]{Lemma}
\newtheorem{corollary}[definition]{Corollary}
\newtheorem{remark}{Remark}[section]

\newcommand{\re}{\mathbb R} 

\def \pt{\partial}

\DeclareMathOperator{\supp}{supp}

\pagebreak

\title[Liouville-type theorem for the new Taylor--Couette flow]{
Liouville-type theorems \\
for the new Taylor--Couette flow \\
of the stationary Navier--Stokes equations
}
\author{Hideo Kozono, Yutaka Terasawa and Yuta Wakasugi}

\address[H. Kozono]{Department of Mathematics, Faculty of Science and Engineering,
Waseda University, Tokyo 169--8555, Japan, 
Mathematical Research Center for Co-creative Society, Tohoku University, 
Sendai 980-8578, Japan}
\email[H. Kozono]{kozono@waseda.jp, hideokozono@tohoku.ac.jp}
\address[Y. Terasawa]{Graduate School of Mathematics, Nagoya University,
Furocho Chikusaku Nagoya 464-8602, Japan}
\email[Y. Terasawa]{yutaka@math.nagoya-u.ac.jp}
\address[Y. Wakasugi]{Graduate School of Advanced Science and Engineering,
Hiroshima University,
Higashi-Hiroshima, 739-8527, Japan}
\email[Y. Wakasugi]{wakasugi@hiroshima-u.ac.jp}
\begin{document}
\begin{abstract}
We study the stationary Navier--Stokes equations
in the region between two rotating concentric cylinders.
We first prove that, under the small Reynolds number,  
if the fluid is axisymmetric and if its velocity is sufficiently small 
in the $L^\infty$-norm, 
then it is necessarily a {\it generalized} Taylor-Couette flow 
which is a {\it new exact solution} of the Navier--Stokes equations. 
If, in addition, the associated pressure is bounded or periodic in the 
$z$-axis, then it coincides with the well-known {\it canonical} 
Taylor-Couette flow.  
Next, we give a certain bound of the Reynolds number and 
the $L^\infty$-norm of the velocity such as the fluid is indeed, necessarily 
axisymmetric. 
It is clarified that smallness of Reynolds number of the fluid in the 
two rotating concentric cylinders governs both  
axisymmetry and the new exact form of the Taylor-Couette flow.   
\end{abstract}
\keywords{Axisymmetric Navier-Stokes equations;
Taylor--Couette-type flow;
Liouville-type theorems}

\maketitle
\section{Introduction}
\footnote[0]{2010 Mathematics Subject Classification. 35Q30; 35B53; 76D05}

This paper concerns the
three-dimensional
stationary incompressible
Navier--Stokes equations
\begin{align}\label{ns0}
    \left\{ \begin{alignedat}{3}
    &v\cdot \nabla v + p = \nu \Delta v,\\
    &\nabla \cdot v = 0,
    \end{alignedat} \right.
\end{align}
where
$v = v(x) = (v_1(x),v_2(x),v_3(x))$
is the velocity vector,
$p = p(x)$
denotes the scalar pressure,
and $\nu > 0$ is the viscosity constant, respectively. 
For the Navier--Stokes equations \eqref{ns0}
in the whole space $\re^3$,
it has been an open problem
whether $v \equiv 0$ is the only solution
under the conditions that
$v$ has the finite Dirichlet integral and vanishes at the spatial infinity (see Galdi \cite[Remark X.9.4]{Ga}).
Seregin \cite{Se18} reformulated the problem in such a way whether any bounded solution $v$ must be constant.
There are many partial answers to this problem,  and for instance, 
we refer readers to
\cite{CaPaZh20, Ch14, ChWo16, ChJaLe21, KoNaSeSv09, KoTeWa17} and references therein.
\par
Recently, the Liouville-type theorems
in noncompact domains in $\re^3$
are also studied.
Carrillo-Pan-Zhang-Zhao \cite{CaPaZhZh20}
showed that a smooth solution with the finite Dirichlet integral to
the Navier--Stokes equations \eqref{ns0} in a
slab domain $\re^2 \times [0,1]$
with the no-slip boundary condition must be zero.
Among other results, they also treated the axially symmetric case 
with the periodic boundary condition and proved the Liouville-type result
under the finite Dirichlet integral.
The assumption of the finite Dirichlet integral was relaxed by Tsai \cite{Ts21}
and Bang-Gui-Wang-Xie \cite{BaGuWaXi}. 
In particular,  \cite{BaGuWaXi} obtained 
the Liouville-type theorem on the Poiseuille flow
of the Navier--Stokes equations \eqref{ns0}
in a slab domain $\re^2 \times [0,1]$
with no-slip boundary condition.
Indeed, they showed that if
$(v,p)$ is a smooth solution satisfying
$\| v \|_{L^{\infty}} < \pi$,
then $v$ must be the Poiseuille flow like 
$v = (ax_3(1-x_3), bx_3(1-x_3), 0)$
with some constants $a,b \in \re$. 
Their result may be regarded as the generalized Liouville-type theorem 
on the non-trivial flow. 
\par
Motivated by these results,  
we have reached a natural question
whether Liouville-type theorems hold
for other non-trivial exact solutions of the Navier--Stokes equations.
In this paper,
we study the Liouville-type theorem on the Taylor--Couette flow in
a region between two rotating concentric cylinders.
\par
%%%%%%%%%%%%
\subsection{Axially symmetric case}
Let
$0 < R_1 < R_2$
be constants
and let
$\Omega = \left\{ (x_1,x_2,x_3) \in \re^3 ; R_1 < 
\sqrt{x_1^2+x_2^2} < R_2 \right\}$,
that is, a region between two concentric cylinders.
In $\Omega$,
we consider the axially symmetric incompressible stationary Navier--Stokes equations in
the cylindrical coordinates:
\begin{align}
\label{ns}
    \left\{\begin{alignedat}{3}
    \left( v^r \partial_r + v^z \partial_z \right) v^r
    - \frac{(v^{\theta})^2}{r} + \partial_r p
    &=
    \nu \left( \partial_r^2 + \frac{1}{r} \partial_r
    + \partial_z^2 - \frac{1}{r^2} \right) v^r,\\
    \left( v^r \partial_r + v^z \partial_z \right) v^{\theta}
    + \frac{v^r v^{\theta}}{r}
    &=
    \nu \left( \partial_r^2 + \frac{1}{r} \partial_r
    + \partial_z^2 - \frac{1}{r^2} \right) v^{\theta},\\
    \left( v^r \partial_r + v^z \partial_z \right) v^z
    + \partial_z p
    &=
    \nu \left( \partial_r^2 + \frac{1}{r} \partial_r + \partial_z^2 \right) v^z,\\
    \frac{1}{r} \partial_r (r v^r) + \partial_z v^z
    &= 0,
    \end{alignedat}\right.
\end{align}
where
$r \in (R_1,R_2)$,
$z \in \re$,
$v = v(r,z) = v^r \bm{e}_r +  v^{\theta} \bm{e}_{\theta} + v^z \bm{e}_z$
with
$\bm{e}_r = (\cos \theta, \sin \theta, 0)$,
$\bm{e}_{\theta} = (-\sin \theta, \cos \theta, 0)$,  
$\bm{e}_z = (0,0,1)$
denoting the basis of the 
cylindrical coordinate,  
and
$p = p(r,z)$.
Moreover, we impose on $v$ the boundary conditions
\begin{align}\label{eq:bc}
	\begin{alignedat}{3}
    v^r(R_j,z) = v^z (R_j,z) = 0 \quad (j=1,2),\\
    v^{\theta}(R_1,z) = R_1 \omega_1, \quad
    v^{\theta}(R_2,z) = R_2 \omega_2
    \end{alignedat}
\end{align}
with some
$\omega_1, \omega_2 \in \re$,
that is, the inner and outer cylinders rotate with the angular velocities $\omega_1$ and $\omega_2$,
respectively.

It is well known that there exists
an exact solution to \eqref{ns} called
the Taylor--Couette flow:
\begin{align}\label{eq:TaCo}
    v = (0,v^{\theta},0)
    \quad \text{with} \quad
    v^{\theta}
    &=
    Ar + B\frac 1r
\end{align}
where
\begin{equation}\label{eq:mu:eta}
A = \left\{
\begin{array}{ll}
\displaystyle{ \frac{\mu - \eta^2}{1-\eta^2}\omega_1  }, &\quad \omega_1 \ne 0, \\
\displaystyle{ \frac{1}{1-\eta^2}\omega_2 }&\quad \omega_1 = 0,
\end{array}
\right.
\quad
B = \left\{
\begin{array}{ll}
\displaystyle{ \frac{1-\mu}{1-\eta^2}\omega_1 R_1^2  }, &\quad \omega_1 \ne 0, \\
\displaystyle{ -\frac{1}{1-\eta^2}\omega_2 R_1^2 }&\quad \omega_1 = 0
\end{array}
\right.
%\frac{\omega_{2}}{\omega_{1}},\quad \eta = \frac{R_{1}}{R_{2}}
\end{equation} 
with non-dimensional quantities $\mu$ and $\eta$ given by  
$$
\mu = \displaystyle{ \frac{\omega_2}{\omega_1} } \,\,\mbox{for $\omega_1\ne  0$}, \quad
\eta = \displaystyle{ \frac{R_1}{R_2} }.  
$$
It is also known that
the Taylor--Couette flow is stable if
$\omega_1$
is sufficiently small.
However, if $\omega_1$ exceeds a certain critical value,
then the Taylor--Couette flow becomes unstable
and a fluid motion so-called the Taylor vortex appears. 
See, e.g., Kirchg\"assner-Soger \cite{KiSo} and Chossat-Ioss \cite{ChIo}.  
For a recent result on the compressible fluid motion, we refer to Kagei-Teramoto \cite{KaTe}.     
\par
In this paper, we show a Liouville-type theorem 
on the more generalized Taylor--Couette flow including \eqref{eq:TaCo},
provided that the velocity is not so large. 
Our generalized Taylor--Couette flow below (\ref{eq:conc:thm}) 
is a {\it new exact solution} of the Navier-Stoeks equations.   
The first main theorem reads as follows:

%%%%%%%%%%%%%%%%%%%%%%%%%%%%%%%%%%%%%%%%%%%%%%%
\begin{theorem}\label{thm:liouville}
Let
$(v,p)$
be an axially symmetric smooth solution of \eqref{ns} in $\Omega$
with the boundary conditions \eqref{eq:bc}.
There exists a constant
$C_1(\nu,R_1,R_2)>0$
such that if
$\omega_1, \omega_2$
and
$\| v \|_{L^{\infty}}$
satisfy
\begin{align}\label{eq:ass:omega}
    \max\{ R_1 |\omega_1|, R_2 |\omega_2| \}
    < C_1(\nu,R_1,R_2)
    %\frac{\nu \pi}{2\sqrt{R_1R_2}(R_2-R_1)}
\end{align}
and
\begin{align}\label{eq:ass:v:Linf}
    \| v \|_{L^{\infty}(\Omega)}
    <
    C_1(\nu,R_1,R_2)
\end{align}
respectively,
then
$(v,p)$
must be the generalized Taylor--Couette flow
\begin{align}
\label{eq:conc:thm}
    \begin{alignedat}{3}
    v^r
    &\equiv
    0,\\
    v^{\theta}
    &=
    Ar + B\frac{1}{r},\\
    v^z
    &=
    \frac{a}{4\nu} R_{1}^{2} \left[
    \left(\frac{r}{R_{1}}\right)^{2} - 1 + \frac{1-\eta^{2}}{\eta^{2}\log \eta} \log \left( \frac{r}{R_1} \right)
    \right],\\
    p
    &=
    az + b
    + \frac{A^2}{2} r^2 
    + 2AB \log r
    - \frac{B^2}{2}\frac{1}{r^2}
    \end{alignedat}
\end{align}
with some constants $a, b \in \re$, where the constants $A$ and $B$ 
are the same as in \eqref{eq:mu:eta}.
In particular,
if the pressure $p$
is bounded or periodic in $z$,
then the constant $a$ in \eqref{eq:conc:thm} must be zero, and hence,
we have $v^z \equiv 0$, which means that 
$v$ coincides with the well-known canonical Taylor--Couette flow given by 
\eqref{eq:TaCo}.
\end{theorem}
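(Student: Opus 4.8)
The plan is to reduce the whole theorem to a single Liouville-type assertion: the $z$-derivative $w:=\partial_z v$ vanishes identically. Granting $\partial_z v\equiv 0$, every component of $v$ depends on $r$ alone and the explicit formulas follow by solving ordinary differential equations. Indeed, the divergence-free condition $\frac1r\partial_r(rv^r)+\partial_z v^z=0$ together with $v^r(R_1,z)=0$ in \eqref{eq:bc} forces $v^r\equiv 0$; inserting $v^r\equiv 0$ and $\partial_z\equiv 0$ into the azimuthal equation of \eqref{ns} leaves the Euler equation $\left(\partial_r^2+\frac1r\partial_r-\frac1{r^2}\right)v^\theta=0$, whose solution meeting \eqref{eq:bc} is $v^\theta=Ar+B/r$ with $A,B$ as in \eqref{eq:mu:eta}. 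The radial equation then reduces to $\partial_r p=(v^\theta)^2/r$ and the axial equation to $\partial_z p=\nu\left(\partial_r^2+\frac1r\partial_r\right)v^z$; since both $\partial_r p$ and $\partial_z p$ are now functions of $r$ only, equality of the mixed second derivatives gives $\partial_r(\partial_z p)=0$, so $\partial_z p$ is a constant $a$. Integrating the resulting ODE for $v^z$ under $v^z(R_j)=0$ yields the stated profile, and integrating $\partial_r p=(v^\theta)^2/r$ yields the stated pressure, which proves \eqref{eq:conc:thm}. The final assertion is then immediate: if $p$ is bounded or periodic in $z$, the term $az$ forces $a=0$, whence $v^z\equiv 0$ and $v$ is the canonical flow \eqref{eq:TaCo}.

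It remains to prove $w=\partial_z v\equiv 0$, which is the heart of the matter. Differentiating the momentum equation in \eqref{ns0} with respect to $z$ shows that $w$ is a divergence-free, axisymmetric field solving the linearized system $(v\cdot\nabla)w+(w\cdot\nabla)v+\nabla q=\nu\Delta w$ with $q:=\partial_z p$. The decisive structural gain is that the boundary data in \eqref{eq:bc} are independent of $z$, so $w$ satisfies the homogeneous Dirichlet condition $w=0$ on both cylinders $r=R_1,R_2$. I would therefore run a weighted energy estimate: multiply the $w$-equation by $w\phi_L^2$, where $\phi_L=\phi_L(z)$ is a cutoff equal to $1$ for $|z|<L$ and supported in $|z|<2L$, and integrate over $\Omega$. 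The convective term $\int(v\cdot\nabla)w\cdot w\,\phi_L^2$ and the pressure term $\int\nabla q\cdot w\,\phi_L^2$ integrate by parts into commutators carrying a factor $\partial_z\phi_L$, hence supported in the shell $L<|z|<2L$, while the boundary contributions on the cylinders vanish because $w=0$ there. This leaves the coercive inequality
\[
\nu\int_\Omega|\nabla w|^2\phi_L^2
\le \int_\Omega|\nabla v|\,|w|^2\phi_L^2 + (\text{shell terms}).
\]
Since $w$ vanishes on $r=R_1,R_2$, the radial Poincar\'e inequality applied to $w\phi_L$ gives $\int|w|^2\phi_L^2\le C_P\int|\nabla w|^2\phi_L^2$ with $C_P=C_P(R_1,R_2)$, so the first term on the right is absorbed once $C_P\|\nabla v\|_{L^\infty}<\nu$.

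I would secure that smallness from the two hypotheses \eqref{eq:ass:omega}--\eqref{eq:ass:v:Linf}: interior and boundary gradient estimates for the stationary Navier--Stokes system bound $\|\nabla v\|_{L^\infty}$, and $\|\nabla p\|_{L^2_{loc}}$ uniformly in $z$, in terms of $\|v\|_{L^\infty}$ and the boundary values $R_j\omega_j$, with a constant tending to $0$ as these data tend to $0$; choosing $C_1(\nu,R_1,R_2)$ small then guarantees $C_P\|\nabla v\|_{L^\infty}<\nu$. With the coefficient strictly positive, the inequality yields a bound $\int_\Omega|\nabla w|^2\phi_L^2\le C$ independent of $L$ (using the a priori $L^\infty$ and local pressure bounds to control the shell terms), whence $\int_\Omega|\nabla w|^2<\infty$. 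Feeding this finiteness back, the shell terms are estimated by Cauchy--Schwarz against the tails $\int_{L<|z|<2L}(|w|^2+|\nabla w|^2+|q|^2)$, which converge to $0$ as $L\to\infty$; letting $L\to\infty$ forces $\int_\Omega|\nabla w|^2=0$, and with the homogeneous boundary condition this gives $w\equiv 0$.

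The main obstacle I anticipate is precisely the control of the shell and pressure terms on the unbounded cylinder: since the solution need not decay in $z$ (indeed $v^z$ does not), one cannot integrate over all of $\Omega$ at once, and the argument must proceed through the cutoff $\phi_L$ together with the two-step bootstrap---first finiteness of the global Dirichlet integral of $w$, then its vanishing. Making this rigorous requires the a priori $L^\infty$ gradient bound and the local pressure bound to hold uniformly in $z$, and it is here that the quantitative interplay between the Reynolds-number smallness \eqref{eq:ass:omega} and the $L^\infty$-smallness \eqref{eq:ass:v:Linf}, through the single threshold $C_1(\nu,R_1,R_2)$ forcing $C_P\|\nabla v\|_{L^\infty}<\nu$, enters in an essential way.
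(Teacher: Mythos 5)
Your skeleton is the same as the paper's --- differentiate in $z$, prove $w=\partial_z v\equiv 0$ by a cutoff energy estimate plus the radial Poincar\'e inequality, then integrate the resulting ODEs (that last part matches the paper step for step and is fine) --- but the two steps at the heart of your Liouville argument have genuine gaps. The first is the absorption: you bound the stretching term by $\int|\nabla v|\,|w|^2\phi_L^2$ and need $C_P\|\nabla v\|_{L^\infty}<\nu$, which you propose to extract from \eqref{eq:ass:omega}--\eqref{eq:ass:v:Linf} via elliptic estimates whose ``constant tends to $0$ as the data tend to $0$''. No such quantitative estimate is available off the shelf: the regularity result the paper invokes (Lemma \ref{lem:bdd}, taken from Bang--Gui--Wang--Xie) yields only \emph{boundedness} of $\nabla v,\nabla^2 v,\nabla p$, not smallness, and proving your version requires a separate nonlinear bootstrap whose constants depend on $\nu$ in a way you would have to track to avoid circularity (it also leaves $C_1$ inexplicit). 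The paper sidesteps this entirely: since $w$ is divergence free, $\partial_r(rw^r)+\partial_z(rw^z)=0$, and vanishes on both cylinders, the term $\int_\Omega (w\cdot\nabla)v^{\lambda}\,w^{\lambda}\varphi_L\,dx$ can be integrated by parts into $-\int_\Omega (w\cdot\nabla)w^{\lambda}\,v^{\lambda}\varphi_L\,dx$ plus a cutoff term; H\"older and Poincar\'e then bound it by $\tfrac{2\sqrt{C_P}}{\nu}\|v\|_{L^{\infty}}Y(L)$, so smallness of $\|v\|_{L^{\infty}}$ alone suffices and gives the explicit threshold $C_1=\nu/(2\sqrt{C_P})$. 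You should adopt this integration by parts rather than rely on the unproven gradient-smallness claim.

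The second gap is your final limit: you claim that the tail $\int_{\{L<|z|<2L\}}|q|^2\,dx$, with $q=\partial_z p$, tends to $0$. This is false in general --- the very conclusion of the theorem allows $p=az+\cdots$ with $a\neq 0$, so $q$ converges to the constant $a$ and that integral grows like $|a|^2L$. As written, this step fails. It is repairable inside your own scheme: keep the factor $|\partial_z\phi_L|\le C/L$ in the Cauchy--Schwarz step, so that the pressure shell term is at most
\begin{align}
    \frac{C}{L}\left(\int_{\{L<|z|<2L\}}|q|^2\,dx\right)^{1/2}\left(\int_{\{L<|z|<2L\}}|w|^2\,dx\right)^{1/2}
    \le \frac{C}{L^{1/2}}\left(\int_{\{L<|z|<2L\}}|\partial_r w|^2\,dx\right)^{1/2},
\end{align}
where boundedness of $q$ and the radial Poincar\'e inequality for $w$ (not decay of $w$) were used; this does vanish as $L\to\infty$ once your first step has produced the finite Dirichlet integral of $w$. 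With these two repairs, your wide-cutoff, two-step bootstrap becomes a viable alternative to the paper's argument, which instead uses a unit-width cutoff, bounds every error term by $C\sqrt{Y'(L)}$ using only the boundedness of $\nabla v$ and $\nabla p$ from Lemma \ref{lem:bdd}, and concludes $Y\equiv 0$ from the differential inequality $Y(L)\le C\sqrt{Y'(L)}$.
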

%%%%%%%%%%%%%%%%%%%%%%%%%%%%%%%%%%%%%%%%%%%%
\begin{remark}
{\rm (i)}
Since the boundary condition \eqref{eq:bc} implies
$\max\{ R_1 |\omega_1|, R_2|\omega_2| \} \le \| v \|_{L^{\infty}}$,
the condition \eqref{eq:ass:omega} is necessary for \eqref{eq:ass:v:Linf}.

\noindent
{\rm (ii)}
From the proof of Theorem \ref{thm:liouville}, we may take
$C_1(\nu,R_1,R_2)$ as
\begin{align}\label{eq:C1}
    C_1(\nu,R_1,R_2) = 
    \frac{\nu}{2\sqrt{C_P}},
\end{align}
where
$C_P := \dfrac{R_2(R_2-R_1)^2}{R_1\pi^2}$. 
This implies that if the viscosity $\nu$ is large in comparison with the radii $R_1$ 
and $R_2$, then the fluid motion remains to be the laminar flow, 
i.e., the generalized Taylor-Couttee flow \eqref{eq:conc:thm}.   

\noindent
{\rm (iii)}
In the non-dimensional form of the equations \eqref{ns},
the Reynolds numbers $Re_j$ are defined by
\begin{align}
    Re_j = \frac{R_j\omega_j(R_2-R_1)}{\nu} \quad (j=1,2).
\end{align}
%(see Temam \cite[Chapter I\!I\, (4.5)]{Te}).
Then, \eqref{eq:C1} implies that the assumption \eqref{eq:ass:omega}
is written as
\begin{align}
    \max\{ Re_1, Re_2 \} &< \frac{R_2-R_1}{\nu}C_1(\nu,R_1,R_2) = \frac{\pi \sqrt{R_1}}{2\sqrt{R_2}}.
\end{align}
Namely, Theorem \ref{thm:liouville} implies that
if the Reynolds numbers and the velocity are bounded by 
a certain constant determined only by means of radii $R_1$ and $R_2$, 
then the axisymmetric flow must be necessarily the generalized Taylor--Couette flow \eqref{eq:conc:thm} which is a new exact solution of the Navier-Stokes equations.  
\par
\noindent
{\rm (iv)}
The assumption that the pressure $p$
is bounded or periodic in $z$ seems physically reasonable, and hence 
in a possible physical situation, 
the laminar axially symmetric flow $v$ in the two rotating concentric cylinder   
is necessarily the canonical Taylor--Couette flow \eqref{eq:TaCo}.
\par
\noindent
{\rm (v)}
Temam \cite[Ch. I\!I, Section 4]{Te} 
studied the uniqueness and the non-uniqueness of the problem \eqref{ns}
in the case of $\omega_2 = 0$ provided that the flow $v$ of (\ref{ns}) is periodic in $z$. 
Introducing the disturbance $u=(u^r, u^\theta, u^z)$ such that $v$ has the form like 
$v = v_0 + u$  with $v_0$ denoting the 
Taylor--Couette flow \eqref{eq:TaCo}, he reduced such a question on uniqueness 
whether $u\equiv 0$ in $\Omega$.  
It was proved in \cite[Ch. I\!I, Proposition 4.2]{Te} that,  under smallness hypotheses 
of the Reynolds number,
$u \equiv 0$ provided that $(u^r, u^z)$ is written by the stream function $\psi$ 
in the coordinate $(r, z)$ satisfying $\pt_r\psi(r_1, z)=\pt_r\psi(r_2, z)=0$. 
In comparison with Temam's  result, 
we remove assumption of periodicity in $z$ and avoid to make use of 
such a stream function,
although we impose on $v$ the smallness condition \eqref{eq:ass:v:Linf}.  
\end{remark}

%%%%%%%%%%%%%%%%%%%%%%%%%%%%%5
\subsection{General case}
Next, we treat the general case
in which the axial symmetry is not necessarily assumed.
Consider the same region
$\Omega$
as in Section 1.1,
and the incompressible stationary
Navier--Stokes equations in the
cylindrical coordinates in
$\Omega$:
\begin{align}
\label{ns2}
    \left\{\begin{alignedat}{3}
    \left( v\cdot \nabla \right) v^r
    - \frac{(v^{\theta})^2}{r} + \partial_r p
    &=
    \nu \left( \Delta - \frac{1}{r^2} \right) v^r
    - \nu \frac{2}{r^2} \partial_{\theta} v^{\theta},\\
    \left( v\cdot \nabla \right) v^{\theta}
    + \frac{v^r v^{\theta}}{r}
    + \frac{1}{r} \partial_{\theta} p
    &=
    \nu \left( \Delta - \frac{1}{r^2} \right) v^{\theta}
    + \nu \frac{2}{r^2}\partial_{\theta} v^r,\\
    \left( v\cdot \nabla \right) v^z
    + \partial_z p
    &=
    \nu \Delta v^z,\\
    \ \frac{1}{r} \partial_r (r v^r)
    + \frac{1}{r} \partial_{\theta} v^{\theta} 
    + \partial_z v^z
    &= 0,
    \end{alignedat}\right.
\end{align}
where we have used the notations
\begin{align}
\label{eq:nabla:rtz}
    \left( v\cdot \nabla \right)
    &= v^r \partial_r + \frac{v^{\theta}}{r} \partial_{\theta}
    + v^z \partial_z,\\
\label{eq:Delta:rtz}
    \Delta
    &= \partial_r^2 + \frac{1}{r} \partial_r + \frac{1}{r^2} \partial_{\theta}^2 + \partial_z^2. 
\end{align}
Moreover, we also impose on $v$ the same boundary conditions as \eqref{eq:bc}, that is,
\begin{align}\label{eq:bc:2}
	\begin{alignedat}{3}
    v^r(R_j,\theta,z) = v^z (R_j,\theta,z) = 0 \quad (j=1,2),\\
    v^{\theta}(R_1,\theta,z) = R_1 \omega_1, \quad
    v^{\theta}(R_2,\theta,z) = R_2 \omega_2
    \end{alignedat}
\end{align}
with some $\omega_{1}, \omega_{2} \in \re$.

For the general case,
under similar assumptions to
\eqref{eq:ass:omega} and \eqref{eq:ass:v:Linf},
we have the following Liouville-type 
theorem for
$(\partial_{\theta}v,\partial_{\theta}p)$
which shows axial symmetry of
the solutions to \eqref{ns2}.

%%%%%%%%%%%%%%%%%%%%%%%%%%%5
\begin{theorem}\label{thm:liouville:2}
Let
$(v,p)$
be a smooth solution of \eqref{ns2} in $\Omega$
with the boundary conditions \eqref{eq:bc:2}.
There exists a constant
$C_2(\nu,R_1,R_2)>0$
such that if
$\omega_1, \omega_2$
and
$\| v \|_{L^{\infty}}$
satisfy
\begin{align}
    \max\{ R_1 |\omega_1|, R_2 |\omega_2| \}
    <
    C_2(\nu,R_1,R_2)
\end{align}
and
\begin{align}
    \| v \|_{L^{\infty}(\Omega)}
    <
    C_2(\nu,R_1,R_2)
\end{align}
respectively,
then it holds that 
$\partial_{\theta}v \equiv 0$ and  $\partial_{\theta}p \equiv 0$ in $\Omega$, 
that is,
$(v,p)$ is axially symmetric.
\end{theorem}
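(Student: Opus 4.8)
The plan is to differentiate the system \eqref{ns2} in $\theta$ and run an energy estimate on the resulting linear system for the pair $(u,q) := (\partial_\theta v, \partial_\theta p)$, where $u = (\partial_\theta v^r, \partial_\theta v^\theta, \partial_\theta v^z)$. Since $\partial_\theta$ commutes with $\partial_r$, $\partial_z$, and multiplication by powers of $r$, differentiating each equation shows that $(u,q)$ solves the linearization of \eqref{ns2} about $v$; after passing to Cartesian components this is the linearized Navier--Stokes system $(v\cdot\nabla)u + (u\cdot\nabla)v + \nabla q = \nu\Delta u$ together with $\nabla\cdot u = 0$. The boundary conditions \eqref{eq:bc:2} are essential here: since $v^r,v^z$ vanish and $v^\theta$ is constant in $\theta$ on $r=R_1,R_2$, we obtain $u=0$ on $\partial\Omega$; moreover periodicity in $\theta$ forces $\int_0^{2\pi} u\,d\theta = 0$, so $u$ has vanishing angular mean on every cross-section.

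First I would test the momentum equation for $u$ against $u$ and integrate over the truncated domain $\Omega_\rho := \Omega\cap\{|z|<\rho\}$ with the measure $r\,dr\,d\theta\,dz$. The transport term $(v\cdot\nabla)u\cdot u$ and the pressure term integrate by parts using $\nabla\cdot v = \nabla\cdot u = 0$ together with $u^r=v^r=0$ on $r=R_j$, leaving only surface integrals over the caps $z=\pm\rho$, while the viscous term produces the dissipation $\nu\int_{\Omega_\rho}|\nabla u|^2$ plus cap contributions. The only genuinely nonlinear term is $\int_{\Omega_\rho}(u\cdot\nabla)v\cdot u$, which after one further integration by parts becomes $-\int_{\Omega_\rho}(u\cdot\nabla)u\cdot v$ (plus cap terms) and is therefore bounded by $\|v\|_{L^\infty}\|u\|_{L^2(\Omega_\rho)}\|\nabla u\|_{L^2(\Omega_\rho)}$. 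The weighted radial Poincar\'e inequality $\|u\|_{L^2(\Omega_\rho)} \le \sqrt{C_P}\,\|\nabla u\|_{L^2(\Omega_\rho)}$, valid slice-by-slice because $u$ vanishes at $r=R_1,R_2$ and with the same constant $C_P = R_2(R_2-R_1)^2/(R_1\pi^2)$ as in \eqref{eq:C1}, then absorbs this term into the dissipation as soon as $\|v\|_{L^\infty} < \nu/(2\sqrt{C_P})$; this is how I would fix $C_2 = \nu/(2\sqrt{C_P})$. The result is the core inequality $\tfrac{\nu}{2}\|\nabla u\|_{L^2(\Omega_\rho)}^2 \le |\mathrm{Flux}(\rho)|$, where $\mathrm{Flux}(\rho)$ gathers all cap integrals at $z=\pm\rho$.

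The main obstacle is to prevent $\mathrm{Flux}(\rho)$ from obstructing the limit $\rho\to\infty$, since $\Omega$ is unbounded in $z$ and no decay is assumed. The cap terms quadratic in $(u,\nabla u)$, such as $\int_{\Sigma_\rho}|u|^2 v^z$ and $\int_{\Sigma_\rho}(\partial_z u)\cdot u$, are harmless: by the cross-sectional Poincar\'e inequality each is controlled by $\int_{\Sigma_\rho}|\nabla u|^2$, which is exactly $\nu^{-1}E'(\rho)$ for $E(\rho):=\nu\int_{\Omega_\rho}|\nabla u|^2$. The delicate term is the pressure cap integral $\int_{\Sigma_\rho} q\,u^z$; here I would subtract the cross-sectional mean of $q$ (legitimate because $\int_{\Sigma_\rho}u^z\,dS=0$ by the vanishing angular mean) and estimate $\|q-\bar q\|_{L^2(\Sigma_\rho)}$ through the momentum equation, the difficulty being precisely that the equation is three-dimensional while the slice is two-dimensional. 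Granting such a bound, one obtains $\mathrm{Flux}(\rho)\le C\,E'(\rho)$ and hence the Saint-Venant-type differential inequality $E(\rho)\le C\,E'(\rho)$; combined with the at-most-linear growth $E(\rho)=O(\rho)$ coming from interior and boundary regularity together with $z$-translation invariance (which bound $\|u\|_{L^\infty}$ and $\|\nabla u\|_{L^\infty}$ uniformly in $z$), this forces $E\equiv 0$, so $\nabla u\equiv 0$, and with $u|_{\partial\Omega}=0$ we conclude $u\equiv 0$. Feeding $u\equiv 0$ back into the linear system gives $\nabla q\equiv 0$, and the vanishing angular mean of $q=\partial_\theta p$ upgrades this to $q\equiv 0$, which yields $\partial_\theta v\equiv 0$ and $\partial_\theta p\equiv 0$ as claimed.
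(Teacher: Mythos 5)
Your overall skeleton is the same as the paper's: differentiate \eqref{ns2} in $\theta$, note that $(u,q)=(\partial_\theta v,\partial_\theta p)$ solves the linearization of the Navier--Stokes system about $v$ with $u=0$ on $r=R_1,R_2$, run an energy estimate in which the radial Poincar\'e inequality (with the same constant $C_P$ as Lemma \ref{lem:Poincare}) absorbs the term $\int (u\cdot\nabla)v\cdot u$ under smallness of $\|v\|_{L^\infty}$, derive a Saint-Venant-type differential inequality, conclude $u\equiv 0$, and then recover $\partial_\theta p\equiv 0$ from the equation plus periodicity in $\theta$. Your Cartesian (rotation-derivative) reformulation is correct and in fact tidier than the paper's computation, which works directly with the cylindrical system \eqref{ns2:dtheta} and must carry the curvature couplings (the terms $\frac{1}{r^2}(\partial_\theta^2 v^\theta+\partial_\theta v^r)$, $\frac{1}{r^2}(\partial_\theta^2 v^r-\partial_\theta v^\theta)$ and the extra nonlinear term $I\!V$), at the price of a more complicated constant $C_2$. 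The sharp truncation at $z=\pm\rho$ versus the paper's Lipschitz cutoff $\varphi_L$ of \eqref{varphiL} is an inessential difference.

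There is, however, one genuine gap, and you flagged it yourself: the pressure flux term $\int_{\{z=\pm\rho\}} q\,u^z\,dS$ is never actually estimated (``Granting such a bound\dots''). The route you sketch --- subtract the cross-sectional mean $\bar q$ and bound $\|q-\bar q\|_{L^2(\{z=\rho\})}$ ``through the momentum equation'' --- is precisely the step that does not go through: the momentum equation expresses the tangential gradient of $q$ on the slice in terms of \emph{second} derivatives of $u$, which are controlled neither by $E'(\rho)$ (a surface integral of $|\nabla u|^2$ only) nor by Lemma \ref{lem:bdd} (which bounds $\nabla^2 v$, not $\nabla^2\partial_\theta v$), and there is no two-dimensional Ne\v{c}as/Bogovskii-type estimate for a three-dimensional system restricted to a codimension-one slice. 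The repair is much simpler and is exactly what the paper does: Lemma \ref{lem:bdd} --- the same regularity statement you already invoke to get $\|u\|_{L^\infty}+\|\nabla u\|_{L^\infty}<\infty$ --- also asserts that $\nabla_{r,\theta,z}p$ is bounded, hence $q=\partial_\theta p\in L^\infty(\Omega)$. Since the companion factor $u^z$ in the flux already carries $\sqrt{E'(\rho)}$ via the cross-sectional Poincar\'e inequality, a crude $O(1)$ bound $\|q\|_{L^2(\{z=\pm\rho\})}\le C$ suffices: $\bigl|\int q\,u^z\,dS\bigr|\le \|q\|_{L^2}\|u^z\|_{L^2}\le C\sqrt{C_P}\,\|\partial_r u^z\|_{L^2}\le C\sqrt{E'(\rho)}$, so the differential inequality becomes $E\le C\bigl(E'+\sqrt{E'}\bigr)$, which still forces $E\equiv 0$ against your linear growth bound (in the paper this appears as $|V|\le C\sqrt{Y'(L)}$, obtained after integrating by parts so that only the $\partial_z\varphi_L$-term, supported in the slab $\Sigma_L$ of \eqref{SigmaL}, survives). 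So: no mean subtraction, and no slice-wise pressure estimate, is needed --- the missing ingredient was already in your hands.
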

%%%%%%%%%%%%%%%%%%%%%%%%%%%

\begin{remark}
From the proof of Theorem \ref{thm:liouville:2},
we may take the constant
$C_2(\nu, R_1, R_2)$
as
\begin{align}
    C_2(\nu, R_1, R_2)
    = \nu \left( \sqrt{C_P} \left( 2 + \frac{C_P}{R_1^2} \right) + \frac{3C_P}{2R_1} \right)^{-1},
\end{align}
where
$C_P := \dfrac{R_2(R_2-R_1)^2}{R_1\pi^2}$.
\end{remark}

Combining Theorems \ref{thm:liouville} and \ref{thm:liouville:2},
we immediately reach the following
Liouville-type theorem for the general
case.
%%%%%%%%%%%%%%%%%%%%%%%%%%
\begin{corollary}\label{cor}
Let $(v,p)$
be a smooth solution of \eqref{ns2} in $\Omega$
with the boundary conditions \eqref{eq:bc:2}.
Let
$C_1(\nu,R_1,R_2)$ and $C_2(\nu,R_1,R_2)$ be the same 
constants as in Theorems \ref{thm:liouville} and \ref{thm:liouville:2}, respectively. 
We set $C_\ast(\nu,R_1,R_2) \equiv \min\{ C_1(\nu,R_1,R_2), C_2(\nu,R_1,R_2) \}$. 
Suppose that $(v,p)$ is a smooth solution of \eqref{ns2} in $\Omega$
with the boundary conditions \eqref{eq:bc:2}. 
If $\omega_1, \omega_2$ and $v$ satisfy
\begin{align}
    \max\{ R_1 |\omega_1|, R_2 |\omega_2| \}
    <
     C_\ast(\nu,R_1,R_2)
\end{align}
and
\begin{align}
    \| v \|_{L^{\infty}(\Omega)}
    <
    C_\ast(\nu,R_1,R_2),
\end{align}
respectively,
then
$(v,p)$ is axially symmetric
and coincides with the generalized Taylor--Couette flow given by 
\eqref{eq:conc:thm}. 
In particular, if $p$ is bounded or periodic in $z$, then $v$ is necessarily 
the canonical Taylor-Couette flow (\ref{eq:TaCo}).
\end{corollary}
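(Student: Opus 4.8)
The plan is to obtain the corollary entirely as a consequence of the two preceding theorems, exploiting the fact that setting $C_\ast = \min\{C_1,C_2\}$ makes the hypotheses simultaneously strong enough to invoke both. First I would record the elementary but essential observation that, by the definition of the minimum, $C_\ast \le C_1$ and $C_\ast \le C_2$; hence any bounds strictly below $C_\ast$ automatically lie below each of the individual thresholds and thus activate both theorems at once.

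Next I would apply Theorem \ref{thm:liouville:2}. Since $C_\ast \le C_2$, the assumed bounds $\max\{R_1|\omega_1|, R_2|\omega_2|\} < C_\ast$ and $\|v\|_{L^\infty(\Omega)} < C_\ast$ immediately fulfil the hypotheses of that theorem. Applying it yields $\partial_\theta v \equiv 0$ and $\partial_\theta p \equiv 0$ in $\Omega$; that is, the solution $(v,p)$ is already axially symmetric.

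The one point requiring genuine, though routine, verification is the reduction of the general system to the axisymmetric one. Here I would substitute $\partial_\theta v^r = \partial_\theta v^\theta = \partial_\theta v^z = 0$ and $\partial_\theta p = 0$ into \eqref{ns2}: the convective operator \eqref{eq:nabla:rtz} collapses to $v^r\partial_r + v^z\partial_z$, the Laplacian \eqref{eq:Delta:rtz} loses its $\frac{1}{r^2}\partial_\theta^2$ term, the pressure term $\frac{1}{r}\partial_\theta p$ in the $\theta$-equation drops out, and the two coupling terms $\mp\nu\frac{2}{r^2}\partial_\theta(\cdot)$ vanish. The system \eqref{ns2} then reduces exactly to \eqref{ns}, while the boundary conditions \eqref{eq:bc:2} become \eqref{eq:bc}. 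I expect this to be the only step needing care, and even it is purely algebraic rather than analytic.

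Finally, since $C_\ast \le C_1$, the same smallness bounds satisfy the hypotheses of Theorem \ref{thm:liouville}. Applying it to the now axially symmetric solution of \eqref{ns} forces $(v,p)$ into the generalized Taylor--Couette form \eqref{eq:conc:thm}. The closing assertion—that bounded or $z$-periodic pressure compels the constant $a$ to vanish, whence $v^z \equiv 0$ and $v$ equals the canonical flow \eqref{eq:TaCo}—is inherited verbatim from the corresponding conclusion of Theorem \ref{thm:liouville}. Thus there is no substantial obstacle: the argument is a chain of citations glued together by the trivial reduction of \eqref{ns2} to \eqref{ns}.
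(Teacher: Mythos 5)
Your proof is correct and follows essentially the same route as the paper, which presents the corollary as an immediate combination of the two theorems: since $C_\ast \le C_2$, Theorem \ref{thm:liouville:2} yields $\partial_\theta v \equiv 0$ and $\partial_\theta p \equiv 0$, and since $C_\ast \le C_1$, Theorem \ref{thm:liouville} then applies to the resulting axisymmetric solution. Your explicit check that \eqref{ns2} with vanishing $\theta$-derivatives reduces to \eqref{ns} and that \eqref{eq:bc:2} becomes \eqref{eq:bc} correctly supplies the one routine step the paper leaves implicit.
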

%%%%%%%%%%%%%%%%%%%%%%%%%%%%
\begin{remark} 
It is easy to see that the generalized Taylor-Couette flow (\ref{eq:conc:thm}) 
is also a solution of the Stokes equations in $\Omega$ with the same 
boundary condition (\ref{eq:bc}).  Hence without any assumption on smallness of 
$\|v\|_{L^\infty(\Omega)}$ as in (\ref{eq:ass:omega}) it holds that 
any bounded smooth solution $v$ of the Stokes equations uniquely coincides with 
the generalized Taylor-Couette flow (\ref{eq:conc:thm}) .  
In particular, if the pressure $p$ is bounded or periodic in $z$, 
then $v$ is necessarily the canonical Taylor-Couette flow (\ref{eq:TaCo}).  
This may be regarded as the Liouville type theorem on the Stokes equations.       
\end{remark}

%%%%%%%%%%%%%%%%%%%%%%%%%%%%%%%%%%%%%%%%%%%%%%%%
%%%%%%%%%%%%%%%%%%%%%%%%%%%%%%%%%%%%%%%%%%%%%%%
\section{Preliminaries}
In what follows,
$C$ denotes generic constants
which may change from line to line.
Also, the operator
$\nabla_{r,z}$
and $\nabla_{r,\theta,z}$
stand for
$\nabla_{r,z} f(r,z) = (\partial_r f, \partial_z f)(r,z)$
and
$\nabla_{r,\theta,z}f(r,\theta,z)
= (\partial_r f, \partial_{\theta} f, \partial_z f)(r,\theta,z)$,
respectively.

We first state the boundedness of derivatives of solutions.
This can be proved in
the same way as \cite[Lemma 2.3]{BaGuWaXi}
in which the three dimensional slab domain is treated,
since all estimates in the proof are local and
do not depend on the shape of $\Omega$.
%%%%%%%%%%%%%%%%%%%%%%%%%%%%%%%%%%%%%%%%%%%%
\begin{lemma}\label{lem:bdd}
Let $(v,p)$ be a smooth solution of
the Navier--Stokes equations \eqref{ns0} in $\Omega$
with the boundary conditions \eqref{eq:bc}.
Assume that
$v$ is bounded.
Then,
$\nabla_{r,\theta,z} v, \nabla_{r,\theta,z}^2 v$,
and
$\nabla_{r,\theta,z}p$
are also bounded.
\end{lemma}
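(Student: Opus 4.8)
The plan is to regard $(v,p)$ as a solution of an inhomogeneous Stokes system and to bootstrap the local integrability of its derivatives until a Morrey/Schauder-type argument delivers pointwise bounds; since the proof is entirely local, the only point beyond textbook elliptic regularity is that the bounds must be uniform in the unbounded $z$-direction. Using $\nabla\cdot v=0$, I rewrite \eqref{ns0} as
\begin{align}
-\nu\Delta v+\nabla p=-\nabla\cdot(v\otimes v),\qquad \nabla\cdot v=0,
\end{align}
so that the forcing is the divergence of the bounded tensor $v\otimes v$.

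First I would cover $\overline{\Omega}$ by two families of charts of a fixed radius $\rho=\rho(\nu,R_1,R_2)$: interior balls $B_{2\rho}\subset\Omega$ and boundary half-balls centered on $\{r=R_1\}\cup\{r=R_2\}$, each equipped with a cutoff $\phi$ supported in the doubled chart and equal to $1$ on the chart. Because $\Omega$ is invariant under translation in $x_3$ and every constant below depends only on $\rho$ and the smooth, $z$-independent coefficients of the system, it suffices to bound the relevant norms on a single chart of each type; the global bounds then follow by taking the supremum over the one-parameter family of $z$-translates. Near the boundary the inhomogeneous condition $v^\theta=R_j\omega_j$ is removed by subtracting a fixed smooth, $z$-independent, azimuthal extension $V=g(r)\bm{e}_\theta$ of the boundary data; such a field is automatically divergence-free, so $w:=v-V$ is divergence-free and satisfies a Stokes system of the same form with homogeneous Dirichlet data and forcing still controlled by $\|v\|_{L^\infty}$ and $\nabla v$, up to a fixed bounded term from $\Delta V$ and $(V\cdot\nabla)V$.

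The base integrability comes from a Caccioppoli estimate: testing the momentum equation against $\phi^2 w$ and integrating by parts, the convective term is rewritten by incompressibility as an integral against $\nabla(\phi^2)$ and is thus bounded by $\|v\|_{L^\infty}^3$, while the pressure is normalized to have zero average on the chart so that $\|p-\bar p\|_{L^2}$ is controlled locally; this yields $\nabla v\in L^2$ on each chart, with a bound depending only on $\|v\|_{L^\infty}$, $\rho$, and the data. With $\nabla v\in L^2_{\mathrm{loc}}$ in hand I would iterate the local $L^q$ estimates for the Stokes operator (interior Agmon--Douglis--Nirenberg estimates, and their up-to-the-boundary counterparts with homogeneous Dirichlet conditions in the boundary charts): at each step $(v\cdot\nabla)v\in L^q$ because $v\in L^\infty$ and $\nabla v\in L^q$, so the estimate returns $w\in W^{2,q}$ and $\nabla p\in L^q$, and Sobolev embedding upgrades $\nabla v$ from $L^q$ to $L^{q^\ast}$ with $q^\ast>q$. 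After finitely many steps $q>3$, so $v\in C^{1,\alpha}$ on each chart and the forcing $(v\cdot\nabla)v$ is itself H\"older continuous; a final application of the local Schauder estimates for the Stokes system, interior and up to the boundary, then gives $v\in C^{2,\alpha}$ and $p\in C^{1,\alpha}$, whence $\nabla v,\nabla^2 v$, and $\nabla p$ are bounded on each chart.

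The step I expect to require the most care is the passage up to the boundary. The walls $\{r=R_j\}$ are smooth, so local boundary $L^q$ and Schauder estimates for the Stokes system under homogeneous Dirichlet data are classical; the points to watch are that the azimuthal extension $V$ preserves the constraint $\nabla\cdot w=0$ exactly (which it does, being $z$-independent and azimuthal) and that the pressure is estimated in its gradient only, since $p$ is determined merely up to an additive constant and is normalized chart-by-chart. Once the uniform local bounds on $\nabla v,\nabla^2 v,\nabla p$ are assembled, the translation invariance in $z$ closes the argument, exactly as in \cite[Lemma 2.3]{BaGuWaXi}.
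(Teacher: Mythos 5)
Your proposal is correct and follows essentially the same route as the paper: the paper offers no details of its own, merely citing \cite[Lemma 2.3]{BaGuWaXi} on the grounds that all estimates there are local and independent of the shape of $\Omega$, and that cited argument is precisely the local Stokes regularity bootstrap (Caccioppoli base estimate, then $L^q$ and Schauder estimates up to the boundary) made uniform in the unbounded $z$-direction by translation invariance, which is what you spell out. One minor point to tighten: in your Caccioppoli step the local pressure bound $\|p-\bar p\|_{L^2}\lesssim \nu\|\nabla v\|_{L^2}+\|v\|_{L^\infty}^2$ reintroduces $\nabla v$ on the doubled chart, so closing the estimate needs the standard Young-inequality-plus-iteration-over-radii absorption (legitimate here because smoothness of $v$ makes all local norms finite, and the constants stay $z$-independent); this is routine and does not affect the validity of your argument.
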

%%%%%%%%%%%%%%%%%%%%%%%%%%%%%%%%%%%%%%%%%%%%%

Next, we prepare the test function used in this paper.
Let $L>1$ and define
\begin{align}\label{varphiL}
    \varphi_L(z) =
    \begin{cases}
    1 &(|z| < L-1),\\
    L-|z| &(L-1 \le |z| \le L),\\
    0 &(|z| > L).
    \end{cases}
\end{align}
Also, we put
\begin{align}\label{SigmaL}
    \Sigma_L
    = \left\{ x \in \Omega \mid
    L-1 \le |z| \le L
    \right\}.
\end{align}
Note that
$\supp \partial_z \varphi_L \subset \Sigma_L$.

Finally, we prove a Poincar\'{e}-type inequality for $r$-direction in $\Omega$ and $\Sigma_L$,
which will be used for
$\partial_z v$ and $\partial_{\theta} v$.
%%%%%%%%%%%%%%%%%%%%%%%%%%%%%%%%%%%%%%%%%%%%%%
\begin{lemma}\label{lem:Poincare}
Let $f = f(r,\theta,z)$ is a smooth function on $\overline{\Omega}$
satisfying the boundary condition
$f(R_j, \theta, z) = 0 \ (j=1,2)$.
Let $L > 1$.
Then, we have
\begin{align}
    \| f \sqrt{\varphi_L} \|_{L^2(D)}
    \le 
    \sqrt{C_P} \| \partial_r f \sqrt{\varphi_L} \|_{L^2(D)},
\end{align}
where
$D$ denotes $\Omega$ or $\Sigma_L$,
and
\begin{align}\label{eq:Poincare:const}
    C_P = \frac{R_2(R_2-R_1)^2}{R_1 \pi^2}.
\end{align}
\end{lemma}
%%%%%%%%%%%%%%%%%%%%%%%%%%%%%%%%%%%%%%%%%%%%%%
\begin{proof}
When $D = \Omega$,
using the cylindrical coordinates and applying the Poincar\'{e} inequality in $r$-direction,
we calculate
\begin{align}
    \| f \sqrt{\varphi_L} \|_{L^2(\Omega)}^2
    &=
    \int_{\re} \int_0^{2\pi} \| f \sqrt{r} \|_{L^2(R_1,R_2)}^2 \varphi_L(z) \,d\theta dz \\
    &\le
    R_2 \int_{\re} \int_0^{2\pi} \| f \|_{L^2(R_1,R_2)}^2
    \varphi_L(z) \,d\theta dz \\
    &\le
    R_2 \frac{(R_2-R_1)^2}{\pi^2}
    \int_{\re} \int_0^{2\pi} \| \partial_r f\|_{L^2(R_1,R_2)}^2 \varphi_L(z) \,d\theta dz \\
    &\le
    \frac{R_2(R_2-R_1)^2}{R_1 \pi^2}
    \int_{\re} \int_0^{2\pi} \| \partial_r f \sqrt{r} \|_{L^2(R_1,R_2)}^2 \varphi_L(z) \,d\theta dz \\
    &=
    C_P
    \| \partial_r f \sqrt{\varphi_L} \|_{L^2(\Omega)}^2.
\end{align}
The case $D = \Sigma_L$ can be proved in the completely same way.
\end{proof}

%%%%%%%%%%%%%%%%%%%%%%%%%%%%%%%%%%%%%%%%%%%%%%%%%%%%%%
%%%%%%%%%%%%%%%%%%%%%%%%%%%%%%%%%%%%%%%%%%%%%%%%%%%%%%
\section{Proof of Theorem \ref{thm:liouville}}
Let us prove Theorem \ref{thm:liouville}.
Assume that
$(v,p)$
is an axially symmetric smooth solution of \eqref{ns} in $\Omega$.
Following the argument of
\cite{BaGuWaXi},
we will first show
\begin{align}\label{eq:dzv0}
    \partial_z v \equiv 0.
\end{align}
To this end,
we differentiate the equations \eqref{ns}
with respect to $z$
and obtain
\begin{align}
\label{ns:dzv}
    \left\{\begin{alignedat}{3}
    \left( \partial_z v^r \partial_r + \partial_z v^z \partial_z \right) v^r
    + \left( v^r \partial_r + v^z \partial_z \right) \partial_z v^r
    - \frac{2v^{\theta} \partial_z v^{\theta}}{r} + \partial_z \partial_r p
    &=
    \nu \left( \partial_r^2 + \frac{1}{r} \partial_r
    + \partial_z^2 - \frac{1}{r^2} \right) \partial_z v^r,\\
    \left( \partial_z v^r \partial_r + \partial_z v^z \partial_z \right) v^{\theta}
    + \left( v^r \partial_r + v^z \partial_z \right) \partial_z v^{\theta}
    + \frac{v^{\theta} \partial_z v^r + v^r \partial_z v^{\theta} }{r}
    &=
    \nu \left( \partial_r^2 + \frac{1}{r} \partial_r
    + \partial_z^2 - \frac{1}{r^2} \right) \partial_z v^{\theta},\\
    \left( \partial_z v^r \partial_r + \partial_z v^z \partial_z \right) v^z
    + \left( v^r \partial_r + v^z \partial_z \right) \partial_z v^z
    + \partial_z^2 p
    &=
    \nu \left( \partial_r^2 + \frac{1}{r} \partial_r + \partial_z^2 \right) \partial_z v^z,\\
    \partial_z \partial_r v^r + \frac{\partial_z v^r}{r} + \partial_z^2 v^z
    &= 0.
    \end{alignedat}\right.
\end{align}
Moreover, we have the boundary conditions for
$\partial_z v$:
\begin{align}\label{bc:pzv}
    \partial_z v (R_j, z) = 0 \quad (j=1,2).
\end{align}
Let
$L > 1$
and take the test function
$\varphi_L$
and the region
$\Sigma_L$
defined by \eqref{varphiL} and \eqref{SigmaL}, respectively.
We multiply the equations of 
$v^r, v^{\theta}, v^z$ in \eqref{ns:dzv}
by
$\partial_z v^r \varphi_L(z)$,
$\partial_z v^{\theta} \varphi_L(z)$,
$\partial_z v^z \varphi_L(z)$,
respectively,
and sum up them and integrate them over $\Omega$.
Then, we have the integral identity
\begin{align}\label{eq:I-V}
    I &= I\!I + I\!I\!I + I\!V + V.
\end{align}
Here, $I = I^r + I^{\theta} + I^z$ is the sum related to the viscous terms 
of the R.H.S. of (\ref{ns:dzv}) defined by
\begin{align}
    I^{\lambda}
    &=
        \nu \int_{\Omega} \left( \partial_r^2 + \frac{1}{r} \partial_r
        + \partial_z^2 - \frac{1}{r^2} \right) \partial_z v^{\lambda} \partial_z v^{\lambda} \varphi_L(z) \,dx
        \quad (\lambda = r,\theta),\\
    I^z 
    &=
        \nu \int_{\Omega} \left( \partial_r^2 + \frac{1}{r} \partial_r
        + \partial_z^2 \right) \partial_z v^{z} \partial_z v^z \varphi_L(z) \,dx.
\end{align}
The terms $I\!I$, $I\!I\!I$, and $I\!V$ are related to the nonlinear terms 
of the L.H.S. of (\ref{ns:dzv}) defined by 
\begin{align}
    I\!I
    &=
    \sum_{\lambda=r,\theta,z} I\!I^{\lambda}
    =
    \sum_{\lambda=r,\theta,z}
        \int_{\Omega} 
        \left( \partial_z v^r \partial_r + \partial_z v^z \partial_z \right)
        v^{\lambda} \partial_z v^{\lambda} \varphi_L(z) \,dx, \\
    I\!I\!I
    &=
    \sum_{\lambda=r,\theta,z} I\!I\!I^{\lambda}
    =
    \sum_{\lambda=r,\theta,z}
    \int_{\Omega} \left( v^r \partial_r + v^z \partial_z \right) 
    \partial_z v^{\lambda} \partial_z v^{\lambda} \varphi_L(z) \,dx,\\
    I\!V
    &=
    -2 \int_{\Omega}
    \frac{v^{\theta} \partial_z v^{\theta}}{r} \partial_z v^r \varphi_L(z) \,dx
    +
    \int_{\Omega}
    \frac{v^{\theta} \partial_z v^r + v^r \partial_z v^{\theta} }{r} \partial_z v^{\theta} \varphi_L(z) \,dx \\
    &=
    \int_{\Omega} \frac{1}{r} \left( v^r \partial_z v^{\theta} - v^{\theta} \partial_z v^r \right)
        \partial_z v^{\theta} \varphi_L(z) \,dx,
\end{align}
respectively.
Finally, the term $V$ is the sum related to the pressure terms of (\ref{ns:dzv}) 
defied by 
\begin{align}
    V
    &=
    \int_{\Omega} \left(
    \partial_z\partial_r p \partial_z v^r
    + \partial_z^2 p \partial_z v^z
    \right) \varphi_L(z) \,dx.
\end{align}

%%%% I
First, we compute the viscous terms $I$.
For $\lambda = r, \theta$,
the integration by parts
with noting
$r(\partial_r^2 + \frac{1}{r}\partial_r) = \partial_r (r \partial_r)$
implies
\begin{align}
    I^{\lambda} 
    &= \nu \int_{\Omega} \left( \partial_r^2 + \frac{1}{r} \partial_r
    + \partial_z^2 - \frac{1}{r^2} \right) \partial_z v^{\lambda} \partial_z v^{\lambda} \varphi_L(z) \,dx \\
    &=
    - 2\pi \nu \int_{\re} \int_{R_1}^{R_2}
    \left(
        |\partial_r \partial_z v^{\lambda} |^2
        + | \partial_z^2 v^{\lambda} |^2
        + \frac{|\partial_z v^{\lambda}|^2}{r^2}
    \right) \varphi_L(z) r \,drdz \\
    &\quad
    - 2\pi \nu \int_{\re} \int_{R_1}^{R_2}
        \partial_z^2 v^{\lambda} 
        \partial_z v^{\lambda} \partial_z \varphi_L(z) r \,drdz \\
    &=: I_1^{\lambda} + I_2^{\lambda}.
\end{align}
In the same way, for $I^z$, we have
\begin{align}
    I^z
    &=
    \nu \int_{\Omega} \left( \partial_r^2 + \frac{1}{r} \partial_r
    + \partial_z^2 \right) \partial_z v^{z}
    \partial_z v^z \varphi_L(z) \,dx \\
    &=
    - 2\pi
    \nu \int_{\re} \int_{R_1}^{R_2}
    \left(
        |\partial_r \partial_z v^{z} |^2
        + | \partial_z^2 v^{z} |^2
    \right) \varphi_L(z) r\,drdz \\
    &\quad
    - 2\pi \nu
    \int_{\re} \int_{R_1}^{R_2}
     \partial_z^2 v^{z}
        \partial_z v^{z}  \partial_z \varphi_L(z) r \,drdz \\
    &=: I_1^z + I_2^z.
\end{align}
Now, for later purpose, we express the sum of good terms by $Y(L)$:
\begin{align}\label{eq:def:YL}
    Y(L)
    &:=
    - (I_1^{r} + I_1^{\theta} + I_1^z) \\
    &=
    \nu \left(
    \| \partial_r \partial_z v \sqrt{\varphi_L} \|_{L^2(\Omega)}^2
    + \| \partial_z^2 v \sqrt{\varphi_L} \|_{L^2(\Omega)}^2
    + \sum_{\lambda=r,\theta} \| r^{-1} \partial_z v^{\lambda} \sqrt{\varphi_L} \|_{L^2(\Omega)}^2
    \right).
\end{align}
We remark that the definition of $\varphi_L(z)$ (see \eqref{varphiL}) implies
\begin{align}
    Y'(L)
    &=
     \nu \left(
    \| \partial_r \partial_z v \|_{L^2(\Sigma_L)}^2
    + \| \partial_z^2 v \|_{L^2(\Sigma_L)}^2
    + \sum_{\lambda=r,\theta} \| r^{-1} \partial_z v^{\lambda} \|_{L^2(\Sigma_L)}^2
    \right).
\end{align}
By using this $Y(L)$, the term $I$ can be written as
\begin{align}\label{eq:I:Y}
    I = - Y(L) + \sum_{\lambda=r,\theta,z} I_2^{\lambda}.
\end{align}
Let us estimate the remainder terms $I_2^{\lambda}$ for $\lambda = r,\theta,z$.
Since $\partial_z v^{\lambda} \in L^{\infty}(\Omega)$
by Lemma \ref{lem:bdd} and
$|\Sigma_L| \le C$
with some constant $C$ independent of $L$,
we estimate
\begin{align}\label{eq:est:lin}
    |I_2^{\lambda}|
    &\le 
    C \| \partial_z^2 v^{\lambda} \|_{L^2(\Sigma_L)}
    \| \partial_z v^{\lambda} \|_{L^2(\Sigma_L)}
    \le
    C \| \partial_z^2 v \|_{L^2(\Sigma_L)} 
    \le C \sqrt{Y'(L)},
\end{align}
where the constant $C$ is independent of $L$.

%%% II 
Next,
we consider the nonlinear term $I\!I$.
By the integration by parts
and the divergence free condition
$\partial_r (r \partial_z v^r) + \partial_z (r \partial_z v^z) = 0$,
the terms
$I\!I^{\lambda}$ for $\lambda = r,\theta,z$
are written as
\begin{align}
    I\!I^{\lambda}
    &=
    - 2\pi \int_{\re} \int_{R_1}^{R_2}
    \left(
    \partial_z v^r \partial_r + \partial_z v^{z} \partial_z 
    \right) \partial_z v^{\lambda} v^{\lambda} \varphi_L(z) r \,drdz \\
    &\quad - 2\pi\int_{\re} \int_{R_1}^{R_2}
    \partial_z v^{z} 
    v^{\lambda} \partial_z v^{\lambda } \partial_z \varphi_L(z) r \,drdz \\
    &=: I\!I_1^{\lambda} + I\!I_2^{\lambda}.
\end{align}
The H\"{o}lder inequality implies
\begin{align}
    |I\!I^{\lambda}_{1}|
    &\le
    \| \partial_z v^r \sqrt{\varphi_L} \|_{L^2(\Omega)}
    \| \partial_r \partial_z v^{\lambda} \sqrt{\varphi_L} \|_{L^2(\Omega)}
    \| v \|_{L^{\infty}(\Omega)} \\
    &\quad 
    + \| \partial_z v^z \sqrt{\varphi_L} \|_{L^2(\Omega)}
    \| \partial_z^2 v^{\lambda} \sqrt{\varphi_L} \|_{L^2(\Omega)}
    \| v \|_{L^{\infty}(\Omega)}
\end{align}
Moreover, by noting the boundary condition \eqref{bc:pzv}
and applying Lemma \ref{lem:Poincare},
we further estimate
\begin{align}
    |I\!I_1^{\lambda}|
    &\le
    \| v \|_{L^{\infty}(\Omega)} \sqrt{C_P}
    \left( 
    \| \partial_r \partial_z v^r \sqrt{\varphi_L} \|_{L^2(\Omega)} 
    \| \partial_r \partial_z v^{\lambda} \sqrt{\varphi_L} \|_{L^2(\Omega)} 
    \right. \\
    &\qquad\qquad\qquad\qquad
    + \left.
    \| \partial_r\partial_z v^z \sqrt{\varphi_L} \|_{L^2(\Omega)}
    \| \partial_z^2 v^{\lambda} \sqrt{\varphi_L} \|_{L^2(\Omega)}
    \right).
\end{align}
Hence, to the term
$I\!I_1 := I\!I_1^{r} + I\!I_1^{\theta} + I\!I_1^z$,
we apply the Schwarz inequality to conclude
\begin{align} \label{eq:est:I}
    & |I\!I_1|  \\
    &\le
    \| v \|_{L^{\infty}(\Omega)} \sqrt{C_P}
   \left(
    2 \| \partial_r \partial_z v^r \sqrt{\varphi_L} \|_{L^2(\Omega)}^2
    + \frac{1}{2} \sum_{\lambda=r,\theta,z}
        \| \partial_r\partial_z v^{\lambda} \sqrt{\varphi_L} \|_{L^2(\Omega)}^2
        \right. \\ 
    & \mbox{ } 
    \left.
    + 2 \| \partial_r \partial_z v^z \sqrt{\varphi_L} \|_{L^2(\Omega)}^2
    + \frac{1}{2} \sum_{\lambda=r,\theta,z}
        \| \partial_z^2 v^{\lambda} \sqrt{\varphi_L} \|_{L^2(\Omega)}^2
   \right). 
%\label{eq:est:I}
\end{align}
Note that the terms in the parentheses are members of $Y(L)$.
Similarly to (\ref{eq:est:lin}) we have by Lemma  \ref{lem:bdd} that 
$$
\|\partial_zv\|_{L^2(\Sigma_L)} \le \|\partial_zv\|_{L^\infty(\Omega)}|\Sigma_L|^{\frac12} 
\le C 
$$
with the constant $C$ independent of $L$.  
Hence, applying  
the Poinc\'are inequality in $\Sigma_L$ with the aid of (\ref{bc:pzv}) 
to the estimate for the term
$I\!I_2 := I\!I_2^r + I\!I_2^{\theta} + I\!I_2^z$, 
we have by the H\"{o}lder inequality that
\begin{align}\label{eq:est:II}
    |I\!I_2|
    &\le
    C \| \partial_z v \|_{L^2(\Sigma_L)}^2 \| v \|_{L^{\infty}(\Omega)}
    \le 
    C \| \partial_r \partial_z v \|_{L^{2}(\Sigma_L)}
    \le C\sqrt{Y'(L)}.
\end{align}

%%%%%%%%%%%%%%% III
Let us estimate the term $I\!I\!I$.
For $\lambda = r, \theta, z$,
we write
$I\!I\!I^{\lambda}$
as
\begin{align}
    I\!I\!I^{\lambda}
    &=
    2\pi
    \int_{\re} \int_{R_1}^{R_2}
    \left( v^r \partial_r + v^z \partial_z \right) 
    \left( \frac{1}{2} |\partial_z v^{\lambda} |^2 \right)
    \varphi_L(z) r \,drdz.
\end{align}
Since divergence free condition means that 
$\partial_r (r v^r) + \partial_z (r v^z) = 0$, 
we have by integration by parts with the aid of (\ref{bc:pzv}) that 
\begin{align}
    I\!I\!I^{\lambda}
    =
    - \pi \int_{\re} \int_{R_1}^{R_2}
    v^z \partial_z \varphi_L(z) |\partial_z v^{\lambda} |^2 r \,drdz.
\end{align}
Then, in the same way to the term $I\!I_2$,
we obtain
\begin{align}\label{eq:est:III}
    |I\!I\!I|
    &\le
    C \| \partial_r \partial_z v \|_{L^{2}(\Sigma_L)}
    \le
    C \sqrt{Y'(L)}.
\end{align}

%%%%% IV
The remaining nonlinear term $I\!V$
can be treated similarly to $I$.
Indeed, using the H\"{o}lder inequality
and Lemma \ref{lem:Poincare},
we have
\begin{align}\label{eq:est:IV}
    \qquad
    |I\!V|
    &\le 
    \| r^{-1} \partial_z v^{r} \sqrt{\varphi_L} \|_{L^2(\Omega)}
    \| \partial_z v^{\theta} \sqrt{\varphi_L} \|_{L^2(\Omega)}
    \| v \|_{L^{\infty}(\Omega)} \\
    &\quad 
    + \| r^{-1} \partial_z v^{\theta} \sqrt{\varphi_L} \|_{L^2(\Omega)}
    \| \partial_z v^{\theta} \sqrt{\varphi_L} \|_{L^2(\Omega)}
    \| v \|_{L^{\infty}(\Omega)} \\
    &\le 
    \| v \|_{L^{\infty}(\Omega)} \sqrt{C_P}
    \left( 
    \| r^{-1} \partial_z v^{r} \sqrt{\varphi_L} \|_{L^2(\Omega)}
    +  \| r^{-1} \partial_z v^{\theta} \sqrt{\varphi_L} \|_{L^2(\Omega)}
    \right) \\
    &\quad \times
    \| \partial_r \partial_z v^{\theta} \sqrt{\varphi_L} \|_{L^2(\Omega)} \\
    &\le
    \| v \|_{L^{\infty}(\Omega)}
    \sqrt{C_P} \\
    &\quad \times 
    \left(
    \frac{1}{2} \sum_{\lambda=r,\theta}
    \| r^{-1} \partial_z v^{\lambda} \sqrt{\varphi_L} \|_{L^2(\Omega)}^2
    + \| \partial_r \partial_z v^{\theta} \sqrt{\varphi_L} \|_{L^2(\Omega)}^2
    \right).
\end{align}
We again note that the terms in the parentheses are members of $Y(L)$.

%%%% V
Finally, we estimate the pressure term $V$.
Since the divergence free condition yields
$\partial_r (r \partial_z v^r) + \partial_z (r\partial_z v^z) = 0$,
we have by integration by parts with (\ref{bc:pzv}) that 
\begin{align}
    V
    &=
    2\pi \int_{\re}\int_{R_1}^{R_2}
    \left(
    \partial_z\partial_r p \partial_z v^r
    + \partial_z^2 p \partial_z v^z
    \right) \varphi_L(z) r \,dr dz \\
    &=
    - \int_{\re} \int_{R_1}
    \partial_z p
    \partial_z v^z \partial_z \varphi_L (z) r \,drdz.
\end{align}
By
$\partial_z p \in L^{\infty}(\Omega)$,
which follows from Lemma \ref{lem:bdd},
$|\Sigma_L| \le C$
with some constant $C$ independent of $L$,
and Lemma \ref{lem:Poincare},
we obtain
\begin{align}\label{eq:est:V}
    |V|
    &\le
    C \| \partial_z p \|_{L^2(\Sigma_L)}
    \| \partial_z v \|_{L^2(\Sigma_L)}
    \le
    C \| \partial_r \partial_z v \|_{L^2(\Sigma_L)}
    \le
    C \sqrt{Y'(L)}.
\end{align}

Putting the estimates
\eqref{eq:I:Y},
\eqref{eq:est:lin}, \eqref{eq:est:I}, \eqref{eq:est:II}, \eqref{eq:est:III}, \eqref{eq:est:IV}, and \eqref{eq:est:V}
together into the original integral identity \eqref{eq:I-V},
we have
\begin{align}
    Y(L)
    &\le
    \frac{2\sqrt{C_P}}{\nu}
    \| v \|_{L^{\infty}(\Omega)}
    Y(L)
    + 
    C \sqrt{Y'(L)}.
\end{align}
Therefore, putting
$C_1(\nu,R_1,R_2) := \frac{\nu}{2\sqrt{C_P}}$
and using the assumption \eqref{eq:ass:v:Linf}
on $\|v \|_{L^{\infty}(\Omega)}$,
we see that the first term of the right-hand side
can be absorbed to the left-hand side.
Hence, we conclude
\begin{align}\label{eq:diffineq:Y}
    Y(L)
    \le
    C \sqrt{Y'(L)}.
\end{align}
Note that the constant $C$
in the right-hand side is independent of
$L$.

%%% 
The differential inequality \eqref{eq:diffineq:Y}
enables us to reach the first goal \eqref{eq:dzv0}.
Indeed, by \eqref{eq:diffineq:Y} it holds that 
$$
Y(L) = 0 
\quad
\mbox{for all $L>1$}.  
$$
Suppose the contrary.  Then  there exists some $L_0 > 1$ such that 
$Y(L_0) > 0$.
Since $Y(L)$ is a non-decreasing function of $L$,
we have $Y(L) > 0$ for all $L \ge L_0$.
Thus, from \eqref{eq:diffineq:Y}, we deduce
for $L > L_0$ that
\begin{align}
    1 \le C Y(L)^{-2} Y'(L) = C \left( - Y(L)^{-1} \right)'.
\end{align}
Integrating it over
$[L_0,L]$
leads to 
\begin{align}
    L-L_0 \le C \left( -Y(L)^{-1} + Y(L_0)^{-1} \right)
    \le C Y(L_0)^{-1}.
\end{align}
However, letting $L$ sufficiently large,
we reach contradiction to conclude that $Y(L) = 0$ for all $L > 1$. 
Now, we have that 
$\nabla_{r,z} \partial_z v \equiv 0$, 
which means that the function
$\partial_z v$
is a constant vector.
Combining this with the boundary condition
\eqref{bc:pzv} implies that
$\partial_z v \equiv 0$.
Thus, we have \eqref{eq:dzv0}.

%%%
Finally, we show that the solutions
$(v,p)$
has the form described in the statement of the theorem.
First, by
$\partial_z v \equiv 0$
and
the divergence free condition, we have
\begin{align}
    \partial_r v^r + \frac{v^r}{r}
    = \frac{1}{r} \partial_r (r v^r) = 0,
\end{align}
which shows
$\partial_r (r v^r) \equiv 0$,
that is,
$r v^r$
is a constant.
However, the boundary condition
$v^r = 0$ on $\partial \Omega$
again implies
$v^r \equiv 0$.

Going back to the system \eqref{ns:dzv},
we have
\begin{align}
    \partial_z \partial_r p = \partial_z^2 p = 0,
\end{align}
which implies that
$\partial_z p = a$
with some constant $a \in \re$.
Integrating it gives
\begin{align}\label{eq:p}
    p(r,z) = a z + h(r)
\end{align}
with some smooth function $h(r)$.

We further go back to the original system \eqref{ns}
and determine
$v^{\theta}, v^{z}$, and $h(r)$.
First,
by noting that $v^r = 0$ and $v$ is independent of $z$,
the second equation of \eqref{ns}
yields that $v^{\theta}$ is subject to the equation
\begin{align}
    \left( \partial_r^2 + \frac{1}{r} \partial_r - \frac{1}{r^2} \right) v^{\theta} = 0.
\end{align}
This is the Euler--Cauchy equation and
we find the general solution of the form
\begin{align}
    v^{\theta} = Ar + \frac{B}{r}.
\end{align}
The boundary condition gives
\begin{align}
    R_2\omega_2 &= v^{\theta}(R_2) = AR_2 + \frac{B}{R_2},\\
    R_1\omega_1 &= v^{\theta} (R_1) = AR_1+ \frac{B}{R_1}.
\end{align}
Solving this, we determine the constants
$A, B$ and obtain
\begin{align}\label{eq:vtheta}
    v^{\theta} (r) =
     \frac{R_2^2\omega_2 - R_1^2\omega_1}{R_2^2-R_1^2} r
    + \frac{R_1^2R_2^2(-\omega_2+\omega_1)}{R_2^2-R_1^2} \frac{1}{r}.
\end{align}
Next, from the third equation of \eqref{ns}
and the formula \eqref{eq:p},
we have the equation of $v^{z}$:
\begin{align}\label{eq:vz}
    \nu \left( \partial_r^2 + \frac{1}{r} \partial_r \right)
    v^z = a,
\end{align}
that is,
\begin{align}
    \frac{1}{r} \partial_r \left( r \partial_r v^z \right) = \frac{a}{\nu}.
\end{align}
This implies
\begin{align}
    r \partial_r v^z(r) = D + \frac{a}{2\nu}r^2
\end{align}
with some constant $D$.
Integrating it over $[R_1, r]$ and using the
boundary condition
$v^z(R_1) = 0$ by \eqref{eq:bc},
we have
\begin{align}
    v^z(r) = D\log \frac{r}{R_1} + \frac{a}{4\nu}(r^2-R_1^2).
\end{align}
From the boundary condition
$v^z(R_2)=0$ by \eqref{eq:bc},
the constant $D$ is determined as
\begin{align}
    D = - \frac{a}{4\nu} \frac{R_2^2-R_1^2}{\log R_2/R_1}.
\end{align}
Thus, we conclude
\begin{align}\label{eq:vz}
    v^{z}(r) = 
    \frac{a}{4\nu}
    \left[ (r^2 - R_1^2) - \frac{R_2^2-R_1^2}{\log (R_2/R_1)} \log \frac{r}{R_1} \right].
\end{align}

Finally, from the first equation of \eqref{ns},
we deduce
\begin{align}
    - \frac{(v^{\theta})^2}{r} + \partial_r p = 0.
\end{align}
This and the formulas \eqref{eq:p} and \eqref{eq:vtheta} lead to
\begin{align}
    h'(r)
    &=
    \frac{1}{r}
    \left[
    \frac{R_2^2\omega_2-R_1^2\omega_1}{R_2^2-R_1^2} r
    + \frac{R_1^2R_2^2(-\omega_2+\omega_1)}{R_2^2-R_1^2} \frac{1}{r}
    \right]^2 \\
    &=
    \left( \frac{R_2^2\omega_2-R_1^2\omega_1}{R_2^2-R_1^2} \right)^2 r
    + \frac{2 R_1^2 R_2^2(-\omega_2+\omega_1)(R_2^2\omega_2-R_1\omega_1)}{(R_2^2-R_1^2)^2} \frac{1}{r}
    + \left( \frac{R_1^2R_2^2(-\omega_2+\omega_1)}{R_2^2-R_1^2}  \right)^2 \frac{1}{r^3}.
\end{align}
Integrating it, we have
\begin{align}
    h(r)
    &=
    b +
    \frac{1}{2} \left( \frac{R_2^2\omega_2-R_1^2\omega_1}{R_2^2-R_1^2} \right)^2 r^2
    + \frac{2 R_1^2 R_2^2(-\omega_2+\omega_1)(R_2^2\omega_2-R_1^2\omega_1)}{(R_2^2-R_1^2)^2} \log r \\
    &\quad 
    -\frac{1}{2} \left( \frac{R_1^2R_2^2(-\omega_2+\omega_1)}{R_2^2-R_1^2}  \right)^2 \frac{1}{r^2}
\end{align}
with some constant
$b \in \re$,
that is, the pressure is given by
\begin{align}\label{eq:p:conclusion}
    p(r,z) 
    &=
    az + b +
    \frac{1}{2} \left( \frac{R_2^2\omega_2-R_1^2\omega_1}{R_2^2-R_1^2} \right)^2 r^2
    + \frac{2 R_1^2 R_2^2(-\omega_2+\omega_1)(R_2^2\omega_2-R_1^2\omega_1)}{(R_2^2-R_1^2)^2} \log r \\
    &\quad
    -\frac{1}{2} \left( \frac{R_1^2R_2^2(-\omega_2+\omega_1)}{R_2^2-R_1^2}  \right)^2 \frac{1}{r^2}.
\end{align}
Rewriting \eqref{eq:vtheta}, \eqref{eq:vz}, and \eqref{eq:p:conclusion} by using
$\mu$ and $\eta$ defined by \eqref{eq:mu:eta}
completes the proof of Theorem \ref{thm:liouville}.

%%%%%%%%%%%%%%%%%%%%%%%%%%%%%%%%
%%%%%%%%%%%%%%%%%%%%%%%%%%%%%%%
%%%%%%%%%%%%%%%%%%%%%%%%%%%%%%%
\section{Proof of Theorem \ref{thm:liouville:2}}
Let us prove Theorem \ref{thm:liouville:2}.
Assume that $(v,p)$
is a smooth solution to \eqref{ns2}.
We differentiate the equations \eqref{ns2} with respect to 
$\theta$
to obtain
\begin{align}
\label{ns2:dtheta}
    \left\{\begin{alignedat}{3}
    \left( \partial_{\theta} v\cdot \nabla \right) v^r
    +\left( v\cdot \nabla \right) \partial_{\theta} v^r
    - \frac{2v^{\theta} \partial_{\theta}v^{\theta}}{r} + \partial_{\theta} \partial_r p
    &=
    \nu \left( \Delta - \frac{1}{r^2} \right) \partial_{\theta} v^r
    - \nu \frac{2}{r^2} \partial_{\theta}^2 v^{\theta},\\
    \left( \partial_{\theta} v\cdot \nabla \right) v^{\theta}
    + \left( v\cdot \nabla \right) \partial_{\theta} v^{\theta}
    + \frac{v^r \partial_{\theta} v^{\theta} + v^{\theta} \partial_{\theta} v^r}{r}
    + \frac{1}{r} \partial_{\theta}^2 p
    &=
    \nu \left( \Delta - \frac{1}{r^2} \right) \partial_{\theta} v^{\theta}
    + \nu \frac{2}{r^2} \partial_{\theta}^2 v^r,\\
    \left( \partial_{\theta} v\cdot \nabla \right) v^z
    + \left( v\cdot \nabla \right) \partial_{\theta} v^z
    + \partial_{\theta} \partial_z p
    &=
    \nu \Delta \partial_{\theta} v^z,\\
    \ \frac{1}{r} \partial_r (r \partial_{\theta} v^r)
    + \frac{1}{r} \partial_{\theta}^2 v^{\theta} 
    + \partial_{z} \partial_\theta v^z
    &= 0.
    \end{alignedat}\right.
\end{align}
Here, we recall that the operators
$(v\cdot \nabla)$ and $\Delta$ are
defined by
\eqref{eq:nabla:rtz} and \eqref{eq:Delta:rtz}, respectively.
We also have the boundary conditions
\begin{align}
    \partial_{\theta} v (R_j, \theta, z) = 0
    \quad (j=1,2).
\end{align}
Let $L>1$
and take the test function
$\varphi_L(z)$
and the region
$\Sigma_L$
defined by \eqref{varphiL} and \eqref{SigmaL},
respectively.
Similarly to the previous section,
we multiply the equations of
$v^r, v^{\theta}, v^z$
in \eqref{ns2:dtheta}
by
$\partial_{\theta} v^r \varphi_L(z),
\partial_{\theta} v^{\theta} \varphi_L(z),
\partial_{\theta} v^z \varphi_L(z)$,
respectively,
and sum up and integrate them over $\Omega$.
As a result, we have the integral identity
\begin{align}\label{eq:4:I-V}
    I = I\!I + I\!I\!I + I\!V + V.
\end{align}
Here, $I = I^r + I^{\theta} + I^z$
is the sum of the viscous term defined by
\begin{align}
    I^r
    &=
    \nu \int_{\Omega} \left[
    \left( \Delta - \frac{1}{r^2} \right) \partial_{\theta} v^r \partial_{\theta} v^r
    - \frac{2}{r^2} \partial_{\theta}^2 v^{\theta} \partial_{\theta} v^r \right] \varphi_{L}(z) \,dx,\\
    I^{\theta}
    &=
    \nu \int_{\Omega} \left[
    \left( \Delta - \frac{1}{r^2} \right) \partial_{\theta} v^{\theta} \partial_{\theta} v^{\theta}
    + \frac{2}{r^2} \partial_{\theta}^2 v^r \partial_{\theta} v^{\theta} 
    \right] \varphi_{L}(z) \,dx,\\
    I^z
    &=
    \nu \int_{\Omega} \Delta \partial_{\theta} v^z \partial_{\theta} v^z \varphi_L(z) \,dx.
\end{align}
Also, $I\!I$, $I\!I\!I$, and $I\!V$
are the nonlinear terms defined by
\begin{align}
    I\!I
    &=
    \sum_{\lambda=r,\theta,z} I\!I^{\lambda}
    =
    \sum_{\lambda=r,\theta,z}
    \int_{\Omega}
    \left( \partial_{\theta} v \cdot \nabla \right) v^{\lambda}
    \partial_{\theta} v^{\lambda} \varphi_L(z)
    \,dx,\\
    I\!I\!I
    &=
    \sum_{\lambda=r,\theta,z} I\!I\!I^{\lambda}
    =
    \sum_{\lambda=r,\theta,z}
    \int_{\Omega} (v\cdot \nabla )\partial_{\theta} 
    v^{\lambda} \partial_{\theta} v^{\lambda} \varphi_L(z) \,dx, \\
    I\!V
    &=
    \int_{\Omega} \left( - \frac{2v^{\theta}\partial_{\theta} v^{\theta} }{r} \partial_{\theta}v^r
    + \frac{v^r\partial_{\theta} v^{\theta} + v^{\theta} \partial_{\theta} v^r}{r} \partial_{\theta} v^{\theta} \right) \varphi_L(z)\,dx \\
    &=
    \int_{\Omega}
    \frac{1}{r} \left( v^r \partial_{\theta} v^{\theta} - v^{\theta} \partial_{\theta} v^r\right)
    \partial_{\theta} v^{\theta} \varphi_L(z) \,dx.
\end{align}
Finally, $V$ is the sum of the pressure terms defined by
\begin{align}
    V
    &=
    \int_{\Omega} \left( \partial_{\theta} \partial_r p \partial_{\theta} v^r
    + \frac{1}{r} \partial_{\theta}^2 p \partial_{\theta} v^{\theta}
    + \partial_{\theta} \partial_z p \partial_{\theta} v^z
    \right) \varphi_L(z) \,dx.
\end{align}

%%% I 
First, we consider the term $I$.
By the integration by parts with the aid of the boundary condition 
$\pt_\theta v(R_j, \theta, z)=0$ for $j=1, 2$, we infer that
\begin{align}
\label{eq:visc:rt}
    I^r + I^{\theta}
    &=
    - \nu \int_{\Omega} 
    \left[
    |\partial_r \partial_{\theta} v^r|^2
    + \frac{1}{r^2} |\partial_{\theta}^2 v^r|^2
    + |\partial_z \partial_{\theta} v^r|^2
    + \frac{1}{r^2} |\partial_{\theta} v^r|^2
    + \frac{2}{r^2} \partial_{\theta}^2 v^{\theta} \partial_{\theta} v^r \right.\\
    &\quad \left.
    + |\partial_r \partial_{\theta} v^{\theta} |^2
    + \frac{1}{r^2} |\partial_{\theta}^2 v^{\theta} |^2
    + |\partial_z \partial_{\theta} v^{\theta}|^2
    + \frac{1}{r^2} |\partial_{\theta} v^{\theta} |^2
    - \frac{2}{r^2} \partial_{\theta}^2 v^r \partial_{\theta} v^{\theta}
    \right] \varphi_L(z) \,dx \\
    &\quad
    -\nu \int_{\Omega}
    \left(
    \partial_z \partial_{\theta} v^r
    \partial_{\theta} v^r
    + \partial_z \partial_{\theta} v^{\theta} \partial_{\theta} v^{\theta} 
    \right) \partial_z\varphi_L(z) \,dx \\
    &=
    -\nu \int_{\Omega} \left[
    |\partial_r \partial_{\theta} v^r|^2
    + |\partial_z \partial_{\theta} v^r |^2
    + |\partial_r \partial_{\theta} v^{\theta} |^2
    + |\partial_z \partial_{\theta} v^{\theta} |^2 
    \right. \\
    &\quad \left.
    + \frac{1}{r^2} |\partial_{\theta}^2 v^{\theta} + \partial_{\theta} v^r |^2
    + \frac{1}{r^2} |\partial_{\theta}^2 v^r - \partial_{\theta} v^{\theta} |^2
    \right] \varphi_L(z) \,dx \\
    &\quad
    -\nu \int_{\Omega}
    \left(
    \partial_z \partial_{\theta} v^r
    \partial_{\theta} v^r
    + \partial_z \partial_{\theta} v^{\theta} \partial_{\theta} v^{\theta} 
    \right) \partial_z\varphi_L(z) \,dx \\
    &=:
    I_1^{r,\theta} + I_2^{r,\theta}
\end{align}
and
\begin{align}
\label{eq:visc:z}
    I^z
    &=
    \nu \int_{\Omega} \Delta \partial_{\theta} v^z \partial_{\theta} v^z \varphi_L(z) \,dx \\
    &=
    -\nu \int_{\Omega} \left[ 
    | \partial_r \partial_{\theta} v^z |^2
    + \frac{1}{r^2} | \partial_{\theta}^2 v^z |^2
    + |\partial_z \partial_{\theta} v^z |^2 
    \right] \varphi_L(z) \,dx \\
    &\quad 
    - \nu \int_{\Omega} 
    \partial_z \partial_{\theta} v^z
    \partial_{\theta} v^z \partial_z\varphi_L(z) \,dx \\
    &=:
    I_1^z + I_2^z.
\end{align}
Similarly to the previous section,
we define
\begin{align}
    Y(L)
    &:=
    - (I_1^{r,\theta} + I_1^z) \\
    &=
    \nu \left(
    \| \partial_r\partial_{\theta} v \sqrt{\varphi_L} \|_{L^2(\Omega)}^2
    + \| \partial_z \partial_{\theta} v \sqrt{\varphi_L} \|_{L^2(\Omega)}^2 \right.\\
    &\left. \qquad
    + \left\| \frac{1}{r} (\partial_{\theta}^2 v^{\theta} + \partial_{\theta} v^r) \sqrt{\varphi_L}\right\|_{L^2(\Omega)}^2
    + \left\| \frac{1}{r} (\partial_{\theta}^2 v^r - \partial_{\theta} v^{\theta} ) \sqrt{\varphi_L} \right\|_{L^2(\Omega)}^2
    + \left\| \frac{1}{r} \partial_{\theta}^2 v^z \sqrt{\varphi_L} \right\|_{L^2(\Omega)}^2
    \right).
\end{align}
We remark that the definition of $\varphi_L(z)$ (see \eqref{varphiL}) implies
\begin{align}
    Y'(L)
    &:=
    \nu \left(
    \| \partial_r\partial_{\theta} v \|_{L^2(\Sigma_L)}^2
    + \| \partial_z \partial_{\theta} v \|_{L^2(\Sigma_L)}^2 \right.\\
    &\left. \qquad
    + \left\| \frac{1}{r} (\partial_{\theta}^2 v^{\theta} + \partial_{\theta} v^r) \right\|_{L^2(\Sigma_L)}^2
    + \left\| \frac{1}{r} (\partial_{\theta}^2 v^r - \partial_{\theta} v^{\theta} )  \right\|_{L^2(\Sigma_L)}^2
    + \left\| \frac{1}{r} \partial_{\theta}^2 v^z \right\|_{L^2(\Sigma_L)}^2
    \right).
\end{align}
Using the above $Y(L)$, we have
\begin{align}\label{eq:4:I:Y}
    I = -Y(L) + I_2^{r,\theta} + I_{2}^{z}.
\end{align}
Since $|\Sigma_L| = 2\pi(R_2^2 - R_1^2)$ is independent of $L$, 
it follows from the the Schwarz inequality and Lemma \ref{lem:bdd}  that 
the remainder terms $I_2^{z}$ is estimated as
\begin{align}
\label{eq:est:3:I}
    |I_2^{z}|
    &\le
    \nu\| \partial_z \partial_{\theta} v^{z} \|_{L^2(\Sigma_L)}
    \| \partial_{\theta} v^{z} \|_{L^2(\Sigma_L)} \\
    & \le
    \nu\|\nabla v\|_{L^\infty(\Omega)}|\Sigma_L|^{\frac12}
    \| \partial_z \partial_{\theta} v \|_{L^2(\Sigma_L)} \\
    & \le C\|\partial_z \partial_{\theta} v \|_{L^2(\Sigma_L)} \\
    &\le C \sqrt{Y'(L)},
\end{align}
with some constant $C>0$ independent of $L$. 
Similarly, it is easy to see that that $I_{2}^{r,\theta}$ has the same bound:
\begin{align}
\label{eq:est:3:I:rth}
	|I_2^{r,\theta}| \le C \sqrt{Y'(L)}.
\end{align}

%%%%%%%%%%%%% II
We next estimate the nonlinear term $I\!I$.
For
$\lambda = r,\theta,z$,
the integration by parts and the divergence free condition imply
\begin{align}
    I\!I^{\lambda}
    &=
    - \int_{\Omega}
    \left( \partial_{\theta} v \cdot \nabla \right) \partial_{\theta} v^{\lambda} v^{\lambda} \varphi_L(z)
    \,dx 
    - \int_{\Omega}
    \partial_{\theta} v^z v^{\lambda} \partial_{\theta} v^{\lambda} \partial_z \varphi_L(z) \,dx \\
    &=: I\!I_1^{\lambda} + I\!I_2^{\lambda}.
\end{align}
By the H\"{o}lder inequality, the term $I\!I_1^{\lambda}$ is estimated as
\begin{align}\label{eq:est:4:II_1}
    |I\!I_1^{\lambda}|
    &\le
    \| \partial_{\theta} v^r \sqrt{\varphi_L} \|_{L^2(\Omega)}
    \| \partial_r \partial_{\theta} v^{\lambda} \sqrt{\varphi_L} \|_{L^2(\Omega)}
    \| v \|_{L^{\infty}(\Omega)} \\
    &\quad
    +  \| \partial_{\theta} v^{\theta} \sqrt{\varphi_L} \|_{L^2(\Omega)}
    \left\| \frac{\partial_{\theta}^2 v^{\lambda}}{r} \sqrt{\varphi_L} \right\|_{L^2(\Omega)}
    \| v \|_{L^{\infty}(\Omega)} \\
    &\quad
    + \| \partial_{\theta} v^z \sqrt{\varphi_L} \|_{L^2(\Omega)}
    \| \partial_z \partial_{\theta} v^{\lambda} \sqrt{\varphi_L} \|_{L^2(\Omega)}
    \| v \|_{L^{\infty}(\Omega)}.
\end{align}
Let us further estimate the right-hand side.
From Lemma \ref{lem:Poincare}, we obtain for $\kappa = r,\theta,z$,
\begin{align}
\label{eq:poincare:2}
    \| \partial_{\theta} v^{\kappa} \sqrt{\varphi_L} \|_{L^2(\Omega)}
    &\le
    \sqrt{C_P}
    \| \partial_r \partial_{\theta} v^{\kappa} \sqrt{\varphi_L} \|_{L^2(\Omega)}.
\end{align}
Moreover, for the term
$\displaystyle \left\| \frac{\partial_{\theta}^2 v^{\lambda}}{r} \sqrt{\varphi_L} \right\|_{L^2(\Omega)}$,
by Lemma \ref{lem:Poincare},
we have, for the case $\lambda = r$,
\begin{align}
\label{eq:poincare:3}
    \qquad
    \left\| \frac{\partial_{\theta}^2 v^{r}}{r} \sqrt{\varphi_L} \right\|_{L^2(\Omega)}
    &\le 
    \left\| \frac{1}{r} \left( \partial_{\theta}^2 v^r - \partial_{\theta} v^{\theta} \right) \sqrt{\varphi_L} \right\|_{L^2(\Omega)}
    + \left\| \frac{1}{r} \partial_{\theta} v^{\theta} \sqrt{\varphi_L} \right\|_{L^2(\Omega)} \\
    &=
    \left\| \frac{1}{r} \left( \partial_{\theta}^2 v^r - \partial_{\theta} v^{\theta} \right) \sqrt{\varphi_L} \right\|_{L^2(\Omega)}
    + \frac{\sqrt{C_P}}{R_1} \left\| \partial_r \partial_{\theta} v^{\theta} \sqrt{\varphi_L} \right\|_{L^2(\Omega)};
\end{align}
and for the case $\lambda = \theta$,
\begin{align}
\label{eq:poinacre:4}
    \qquad
    \left\| \frac{\partial_{\theta}^2 v^{\theta}}{r} \sqrt{\varphi_L} \right\|_{L^2(\Omega)}
    &\le
    \left\| \frac{1}{r} \left( \partial_{\theta}^2 v^{\theta} + \partial_{\theta} v^{r} \right) \sqrt{\varphi_L} \right\|_{L^2(\Omega)}
    + \frac{\sqrt{C_P}}{R_1}
    \| \partial_r \partial_{\theta} v^{r} \sqrt{\varphi_L} \|_{L^2(\Omega)}.
\end{align}
Therefore, combining \eqref{eq:est:4:II_1}, \eqref{eq:poincare:2}, \eqref{eq:poincare:3}, and \eqref{eq:poinacre:4},
and applying the Schwarz inequality,
we deduce
\begin{align}
    |I\!I_1^r|
    &\le
    \frac{ \| v \|_{L^{\infty}(\Omega)} \sqrt{C_P}}{2}
    \left\{
    2 \| \partial_r \partial_{\theta} v^r \sqrt{\varphi_L} \|_{L^2(\Omega)}^2
    + \left( 1 + \frac{2C_P}{R_1^2} \right) \| \partial_r \partial_{\theta} v^{\theta} \sqrt{\varphi_L} \|_{L^2(\Omega)}^2
     \right. \\
    &\left. \quad
    + \| \partial_r \partial_{\theta} v^{z} \sqrt{\varphi_L} \|_{L^2(\Omega)}^2 
    + 2 \left\| \frac{1}{r} (\partial_{\theta}^2 v^r - \partial_{\theta} v^{\theta} )\sqrt{\varphi_L} \right\|_{L^2(\Omega)}^2
    +  \| \partial_z \partial_{\theta} v^r \sqrt{\varphi_L} \|_{L^2(\Omega)}^2
    \right\},\\
    |I\!I_1^{\theta}|
    &\le
    \frac{ \| v \|_{L^{\infty}(\Omega)} \sqrt{C_P}}{2}
    \left\{
    \left( 1 + \frac{2C_P}{R_1^2} \right) \| \partial_r \partial_{\theta} v^r \sqrt{\varphi_L} \|_{L^2(\Omega)}^2
    + 2 \| \partial_r \partial_{\theta} v^{\theta} \sqrt{\varphi_L} \|_{L^2(\Omega)}^2
     \right. \\
    &\left. \quad
    + \| \partial_r \partial_{\theta} v^{z} \sqrt{\varphi_L} \|_{L^2(\Omega)}^2 
    + 2 \left\| \frac{1}{r} (\partial_{\theta}^2 v^{\theta} + \partial_{r} v^{\theta} )\sqrt{\varphi_L} \right\|_{L^2(\Omega)}^2
    +  \| \partial_z \partial_{\theta} v^{\theta} \sqrt{\varphi_L} \|_{L^2(\Omega)}^2
    \right\},\\
    |I\!I_1^{z}|
    &\le
    \frac{ \| v \|_{L^{\infty}(\Omega)} \sqrt{C_P}}{2}
    \left\{
    \| \partial_r \partial_{\theta} v^r \sqrt{\varphi_L} \|_{L^2(\Omega)}^2
    + \| \partial_r \partial_{\theta} v^{\theta} \sqrt{\varphi_L} \|_{L^2(\Omega)}^2
     \right. \\
    &\left. \quad
    + 2\| \partial_r \partial_{\theta} v^{z} \sqrt{\varphi_L} \|_{L^2(\Omega)}^2 
    + \left\| \frac{1}{r} \partial_{\theta}^2 v^{z} \sqrt{\varphi_L} \right\|_{L^2(\Omega)}^2
    + \| \partial_z \partial_{\theta} v^{z} \sqrt{\varphi_L} \|_{L^2(\Omega)}^2
    \right\}.
\end{align}
Finally, adding the above estimates,
we conclude that the term
$I\!I_1 := I\!I_1^{r} + I\!I_1^{\theta} + I\!I_1^{z}$
satisfies the estimate
\begin{align}\label{eq:est:3:II}
    |I\!I_1|
    \le \| v \|_{L^{\infty}(\Omega)} \frac{C_{I\!I}}{\nu} Y(L)
\end{align}
with
\begin{align}
    C_{I\!I} 
    := \sqrt{C_P} \left( 2 + \frac{C_P}{R_1^2} \right).
\end{align}
By Lemma \ref{lem:bdd} and the Poinc\'are inequality in $\Sigma_L$we see that 
the remainder term $I\!I_2^{\lambda}$
can be estimated as
\begin{align}\label{eq:est:4:II_2}
    |I\!I_2^{\lambda}|
    &\le \| \partial_{\theta} v \|_{L^2(\Sigma_L)}^2 \| v \|_{L^{\infty}(\Sigma_L)} \\
    &\le \|\partial_\theta v \|_{L^\infty(\Omega)}|\Sigma_L|^{\frac12}\|v\|_{L^\infty(\Omega)}
    \|\pt_\theta v\|_{L^2(\Sigma_L)} \\
    & C \|\pt_r\pt_\theta v\|_{L^2(\Sigma_L)} \\
    & \le C \sqrt{Y'(L)}
\end{align}
for $\lambda = r,\theta,z$, where $C$ is the constant independent of $L$.  
\par
%%% III
Thirdly, we consider the nonlinear term $I\!I\!I$.
The integration by parts and the divergence free condition lead to
\begin{align}
    I\!I\!I^{\lambda}
    &=
    \int_{\Omega} (v\cdot \nabla) \left( \frac{1}{2} |\partial_{\theta} v^{\lambda}|^2 \right) \varphi_L(z) \,dx 
    =
    -\frac{1}{2} \int_{\Omega} v^z |\partial_{\theta} v^{\lambda}|^2 \partial_z \varphi_L(z) \,dx
\end{align}
for $\lambda = r,\theta,z$.
Moreover, similarly to \eqref {eq:est:4:II_2} we have that  
\begin{align}\label{eq:est:3:III}
    |I\!I\!I^{\lambda}|
    &\le
    C \| \partial_{\theta} v \|_{L^{2}(\Sigma_L)}^2 \| v \|_{L^{\infty}(\Sigma_L)}
    \le C \sqrt{Y'(L)}
\end{align}
for $\lambda = r,\theta,z$, where $C$ is a constant independent of $L$.  
%%%% IV
Next, we treat the remaining nonlinear term $I\!V$.
By Lemma \ref{lem:Poincare} and the Schwarz inequality,
we have
\begin{align}\label{eq:est:3:IV}
    |I\!V|
    &\le
    R_1^{-1}
    \left( \| \partial_{\theta} v^r \sqrt{\varphi_L} \|_{L^2(\Omega)}
        + \| \partial_{\theta} v^{\theta} \sqrt{\varphi_L} \|_{L^2(\Omega)} \right) \\
    &\quad \times
    \| \partial_{\theta} v^{\theta} \sqrt{\varphi_L} \|_{L^2(\Omega)}
    \| v \|_{L^{\infty}(\Omega)} \\
    &\le
    \| v \|_{L^{\infty}(\Omega)} \frac{C_P}{R_1}
    \left( \frac{1}{2} \| \partial_r \partial_{\theta} v^r \sqrt{\varphi_L} \|_{L^2(\Omega)}^2
        + \frac{3}{2} \| \partial_r \partial_{\theta} v^{\theta} \sqrt{\varphi_L} \|_{L^2(\Omega)}^2
    \right) \\
    &\le
    \| v \|_{L^{\infty}(\Omega)} \frac{C_{I\!V}}{\nu} Y(L).
\end{align}
where
\begin{align}
    C_{I\!V} := \frac{3C_P}{2R_1}.
\end{align}

Finally, we estimate the pressure terms $V$.
The integration by parts and the divergence free condition lead to
\begin{align}
    V =
    - \int_{\Omega} \partial_{\theta} p  \partial_{\theta} v^z \partial_z \varphi_L(z) \,dx.
\end{align}
Therefore, from Lemma \ref{lem:bdd} and the Poinc\'are inequality in $\Sigma_L$ 
we obtain
\begin{align}\label{eq:est:3:V}
    |V|
    &\le
    C \| \partial_{\theta} p \|_{L^2(\Sigma_L)} \| \partial_{\theta} v^z \|_{L^2(\Sigma_L)}
    \le
    C \| \partial_r \partial_{\theta} v^z \|_{L^2(\Sigma_L)}
    \le
    C \sqrt{Y'(L)}.
\end{align}

Now we put the estimates
\eqref{eq:4:I:Y}, \eqref{eq:est:3:I}, \eqref{eq:est:3:I:rth}, \eqref{eq:est:3:II}, \eqref{eq:est:4:II_2}, \eqref{eq:est:3:III}, \eqref{eq:est:3:IV}, and \eqref{eq:est:3:V}
together into \eqref{eq:4:I-V} and conclude
\begin{align}
    Y(L)
    &\le 
    \| v \|_{L^{\infty}(\Omega)} \frac{C_{I\!I} + C_{I\!V} }{\nu} Y(L) + C\sqrt{Y'(L)}.
\end{align}
We define
\begin{align}
    C_2(\nu,R_1,R_2) := \left[ \frac{C_{I\!I} + C_{I\!V} }{\nu} \right]^{-1}.
\end{align}
Then, by the assumption
$\| v \|_{L^{\infty}(\Omega)} < C_2(\nu, R_1, R_2)$,
we reach the differential inequality
\begin{align}
    Y(L) \le C \sqrt{Y'(L)}.
\end{align}
Therefore, the completely same argument as the previous section,
we have 
$Y(L) = 0$
for all $L>1$.
This implies that 
$\partial_r \partial_{\theta} v \equiv 0$,
that is,
$\partial_{\theta} v$
is independent of $r \in [R_1, R_2]$.
However, the boundary condition requires
$\partial_{\theta} v (R_j, \theta, z) = 0 \ (j=1,2)$
for any $(\theta, z) \in [0,2\pi] \times \mathbb{R}$.
Thus,
$\partial_{\theta} v$
must be identically zero.
Then, by \eqref{ns2:dtheta}, we see that
$\partial_r \partial_{\theta} p = \partial_{\theta}^2 p = \partial_z \partial_{\theta} p = 0$,
that is,
$\partial_{\theta} p \equiv c$ in $\Omega$ 
with some constant $c \in \mathbb{R}$.
However, since $p$ must be periodic in $\theta$,
we conclude $c = 0$,
that is, $\partial_{\theta} p \equiv 0$. 
This completes the proof of Theorem \ref{thm:liouville:2}.

\section*{Acknowledgement}
This work was supported by JSPS 	
Grant-in-Aid for Scientific Research (A)
Grant Number JP21H04433.  
\par
The authors declare no conflicts of interest.
Data sharing is not applicable to this article as no data were created or analyzed in this study.


\begin{thebibliography}{99}
\bibitem{BaGuWaXi}\textsc{Bang, J., Gui, C., Wang, Y., Xie, C.},
\newblock \textit{Liouville-type theorems for steady solutions to the Navier--Stokes system in a slab},
\newblock arXiv:2205.13259v4.

\bibitem{CaPaZh20}\textsc{Carrillo, B., Pan, X., Zhang, Q.-S.},
\newblock\textit{Decay and vanishing of some axially symmetric D-solutions of the Navier-Stokes equations},
\newblock  J. Funct. Anal. \textbf{279} (2020), 108504, 49 pp.

\bibitem{CaPaZhZh20}\textsc{Carrillo, B., Pan, X., Zhang, Q.-S., Zhao, N.},
\newblock\textit{Decay and Vanishing of some D-Solutions of the Navier-Stokes Equations},
\newblock Arch. Rational Mech. Anal. \textbf{237} (2020) 1383--1419.


\bibitem{Ch14}\textsc{Chae, D.}, 
\newblock\textit{Liouville-Type theorems for the forced Euler equations and the Navier-Stokes equations}. 
\newblock Commun. Math. Phys. \textbf{326}, 37--48 (2014). 

\bibitem{ChWo16}\textsc{Chae, D., Wolf, J.},
\newblock\textit{On Liouville type theorems for the steady Navier-Stokes equations in
$\mathbb{R}^3$},
\newblock J. Differential Equations \textbf{261} (2016), 5541--5560.

\bibitem{ChIo} \textsc{Chossat, P., Ioss G.},  
\newblock\textit{The Couette-Taylor problem,}
\newblock Applied Mathematical Sciences, \textbf{102}, Springer-Verlag, 
Berlin Heiderberg New York, 1994.

\bibitem{ChJaLe21}\textsc{Chamorro, D., Jarr\'{i}n, O., Lemari\'{e}-Rieusset, P.-G.},
\newblock \textit{Some Liouville theorems for stationary Navier--Stokes equations in Lebesgue and Morrey spaces},
\newblock Ann.\ H.\ Poincar\'{e} Anal.\ Non Lin\'{e}aire \textbf{38} (2021), 689--710.

\bibitem{Ga} \textsc{Galdi, G.P.}, 
An Introduction to the Mathematical Theory of the Navier-Stokes Equations, Steady-State Problems.
Second Edition, 
\newblock Springer Monographs in Mathematics, Springer-Verlag, Berlin-Heidelberg-New York, 2011.
%
\bibitem {KaTe}\textsc{Kagei, Y., Teramoto, Y.}, 
\newblock\textit{On the spectrum of the linearized operator around compressible
Couette flows between two concentric cylinders}, 
\newblock J. Math. Fluid Mech., \textbf{22} no.2 (2020), Paper No. 21, 23 pp.   
%
\bibitem{KiSo} \textsc{Kirchg\"assner, K., Soger,P.}, 
\newblock\textit{ Branching analysis for the Taylor problem},  
\newblock Quart. J. Mech. Appl. Math., \textbf{22} (1969), 183--209.

\bibitem{KoNaSeSv09}\textsc{Koch, G., Nadirashvili, N., Seregin, G., Sverak, V.},
\newblock\textit{Liouville theorems for the Navier-Stokes equations and applications}.
\newblock Acta Math. \textbf{203} (2009), 83--105.

\bibitem{KoTeWa17} \textsc{Kozono, H., Terasawa, Y., Wakasugi, Y.},
\newblock\textit{A remark on Liouville-type theorems for the stationary
Navier-Stokes equations in three space dimensions}.
\newblock J. Funct. Anal. \textbf{272} (2017), 804--818. 

\bibitem{Se18}\textsc{Seregin, G.},
\newblock \textit{Remarks on Liouville type theorems for steady-state Navier--Stokes equations},
\newblock Algebra Anal. \textbf{30}, (2018), 238--248.

\bibitem{Te}\textsc{Temam, R.},
\newblock Navier--Stokes Equations Theory and Numerical Analysis,
\newblock North-Holland Publishing Company, 1977.

\bibitem{Ts21}\textsc{Tsai, T.-P.},
\newblock \textit{Liouville type theorems for stationary Navier--Stokes equations},
\newblock SN Partial Differ.\ Equ.\ Appl.\ \textbf{2}, 10 (2021). 


\end{thebibliography}
\end{document}